\newtheorem{theorem}{Theorem}[section]
\newtheorem{lemma}[theorem]{Lemma}
\newtheorem{prop}[theorem]{Proposition}
\newtheorem{cor}[theorem]{Corollary}
\newtheorem{definition}[theorem]{Definition}
\theoremstyle{remark}
\newtheorem{example}[theorem]{Example}
\newtheorem{remark}[theorem]{Remark}
\newtheorem{problem*}{Problem}
\newtheorem{remark*}{Remark}
\newtheorem{convention*}{Convention}
\newtheorem{notation*}{Notation}
\newtheorem{examples*}{Examples}
\newtheorem{example*}{Example}
\newtheorem{warning*}{Warning}
\def\R{{\mathbb R}}
\def\C{{\mathbb C}}
\def\Z{{\mathbb Z}}
\def\H{{\mathbb H}}
\def\CF{{\mathcal F}}
\def\CG{{\mathcal G}}
\def\CH{{\mathcal H}}
\def\mod2{\ (\mbox{mod}\ 2)}
\def\diag{\mbox{diag}}
\def\rank{\mbox{rank}}
\def\Tr{\mbox{Tr}}
\def\Span{\mbox{span}}
\def\la{\langle}
\def\ra{\rangle}
\def\e{\varepsilon}
\begin{document}
\title{Binary Frames with Prescribed Dot Products and Frame Operator }
\author[V.
 Furst]{Veronika~Furst}
\address{Veronika Furst, Department of Mathematics, Fort Lewis College, Durango, Colorado 81301, USA}
\email{furst\_v@fortlewis.edu}
\author[E. P.
 Smith]{Eric~P.~Smith}
\address{Eric P. Smith, Department of Mathematics, Fort Lewis College, Durango, Colorado 81301, USA}
\email{eric.powell.smith@gmail.com}

\begin{abstract}
This paper extends three results from classical finite frame theory over real or complex numbers to binary frames for the vector space $\Z_2^d$.  Without the notion of inner products or order, we provide an analog of the ``fundamental inequality" of tight frames.  In addition, we prove the binary analog of the characterization of dual frames with given inner products and of general frames with prescribed norms and frame operator.
\end{abstract}

\maketitle

\section{Introduction}
Due to applications in signal and image processing, data compression, sampling theory, and other problems in engineering and computer science, frames in finite-dimensional spaces have received much attention from pure and applied mathematicians alike, over the past thirty years (see, for example, Chapter 1 of \cite{C13}).  The redundant representation of vectors inherent to frame theory is central to the idea of efficient data storage and transmission that is robust to noise and erasures.  

Frames for $\C^d$ and $\R^d$ have been extensively studied (see \cite{C03} for a standard introduction to frame theory, \cite{KC07} for applications, and \cite{HKLW07} for an exposition at the undergraduate level).  Noting the similarity between frames and error-correcting codes, Bodmann, Le, Reza, Tobin, and Tomforde (\cite{BLRTT09}) introduced the concept of {\em binary frames}, that is, finite frames for the vector space $\Z_2^d$.  Binary Parseval frames robust to erasures were characterized in \cite{BCM14}, and their Gramian matrices were studied in \cite{BBBBM17}.  A more generalized approach to binary frames was taken in \cite{HLS15}.

We begin with a brief introduction to classical frame theory terminology.  Let $\H^d$ denote the Hilbert space $\C^d$ or $\R^d$.

\begin{definition} \label{framedef1}
A (finite) \emph{frame} for a Hilbert space $\H^d$ is a collection $\{x_j\}_{j=1}^K$ of vectors in $\H^d$ for which there exist finite constants $A, B > 0$ such that for every $y\in\H^d$,
\[ A \|y\|^2 \leq \sum_{j=1}^K |\la y, \, x_j \ra|^2 \leq B \|y\|^2. \]
The constants $A$ and $B$ are known as {\em frame bounds}.  An \emph{$A$-tight frame} is one for which $A=B$, and a \emph{Parseval frame} is one for which $A=B=1$.
\end{definition}

The vectors $x_j$ in the above definition need not be orthogonal or even linearly independent.  An orthonormal basis is most closely resembled by a Parseval frame, for which we have the (not necessarily unique) reconstruction formula:

\begin{prop} \label{Parsreconsprop}
A collection of vectors $\{x_j\}_{j=1}^K$ in a finite-dimensional Hilbert space $\H^d$ is a Parseval frame for $\H^d$ if and only if
\[
y = \sum_{j=1}^K \la y, \, x_j \ra\, x_j
\]
for each $y\in\H^d$.
\end{prop}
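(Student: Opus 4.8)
The plan is to reformulate both sides of the claimed equivalence in terms of a single linear map, the \emph{frame operator} $S\colon \H^d \to \H^d$ defined by $Sy = \sum_{j=1}^K \la y,\, x_j \ra\, x_j$. The reconstruction formula says precisely that $S = I$, while the Parseval condition says that the quadratic form $y \mapsto \la Sy,\, y \ra$ coincides with $\|y\|^2$. The proposition will follow once these two statements are shown to be equivalent, and finite-dimensionality guarantees that $S$ is everywhere defined and bounded, so no convergence issues arise.

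First I would handle the direction that assumes the reconstruction formula. If $Sy = y$ for every $y$, then pairing with $y$ and expanding the sum gives
\[ \|y\|^2 = \la y,\, y \ra = \la Sy,\, y \ra = \sum_{j=1}^K \la y,\, x_j \ra \la x_j,\, y \ra = \sum_{j=1}^K |\la y,\, x_j \ra|^2, \]
which is the Parseval condition with frame bounds $A = B = 1$. This direction is a routine computation using only sesquilinearity of the inner product.

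For the converse I would first record that $S$ is self-adjoint: expanding both sides shows $\la Sy,\, z \ra = \sum_j \la y,\, x_j \ra \la x_j,\, z \ra = \overline{\la Sz,\, y \ra}$, so $\la Sy,\, z \ra = \la y,\, Sz \ra$ for all $y,z$. The Parseval hypothesis says $\la Sy,\, y \ra = \|y\|^2 = \la y,\, y \ra$ for every $y$, so the self-adjoint operator $T = S - I$ satisfies $\la Ty,\, y \ra = 0$ for all $y$. The goal is then to conclude $T = 0$, i.e.\ $S = I$, which is exactly the reconstruction formula.

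The main obstacle is this last implication, and it is where the real/complex distinction matters. Over $\C^d$, vanishing of the quadratic form alone already forces $T = 0$ by complex polarization. Over $\R^d$ the quadratic form need not determine the operator, so here I would invoke self-adjointness through the real polarization identity $4\la Ty,\, z \ra = \la T(y+z),\, y+z \ra - \la T(y-z),\, y-z \ra$, valid for self-adjoint $T$; since its right-hand side vanishes, $\la Ty,\, z \ra = 0$ for all $y,z$, whence $T = 0$. In either case $S = I$, completing the proof.
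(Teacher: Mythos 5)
Your proof is correct: both directions are handled properly, and you rightly flag the one subtle point, namely that over $\R$ the vanishing of the quadratic form $\la Ty,\,y\ra$ alone does not force $T=0$, so self-adjointness of $S-I$ and the real polarization identity are genuinely needed (over $\C$ complex polarization suffices). Note, however, that the paper states this proposition without proof, as standard classical background preceding its binary-frame results, so there is no in-paper argument to compare against; your frame-operator-plus-polarization argument is the standard textbook proof and is complete as written.
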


\begin{definition} \label{opsdef}
Let $\{x_j\}_{j=1}^K$ be a frame for the finite-dimensional Hilbert space $\H^d$.  The corresponding {\em frame operator} $S: \H^d\to\H^d$ is defined by
\[ S(x) = \sum_{j=1}^K \la x, \, x_j \ra \, x_j. \]
It can be seen as the composition $S = \Theta\Theta^*$ of the {\em synthesis operator} $\Theta:\C^K\to\H^d$ and its adjoint, the {\em analysis operator} $\Theta^*:\H^d\to\C^K$, given by the formulas 
\[ \Theta\left( \left[ \begin{array}{c} c_1\\ c_2\\ \vdots\\ c_K \end{array} \right] \right) = \sum_{j=1}^K c_jx_j
\quad\quad
\mbox{and}
\quad\quad
\Theta^*(x) = \left[ \begin{array}{c} \la x, \, x_1 \ra\\ \la x, \, x_2 \ra\\ \vdots\\ \la x, \, x_K \ra \end{array} \right], \]
respectively.

\end{definition}

The frame operator is a bounded, invertible, self-adjoint operator satisfying $AI_d \leq S \leq BI_d$.  Here and in what follows, we use $I_d$ to denote the $d\times d$ identity matrix and $0_d$ to denote the $d\times d$ zero matrix.  A frame is Parseval if and only if its frame operator is the identity operator.

From both a pure and an applied point-of-view, construction of frames with desired properties has been a central question (\cite{BJ15}).  In particular, much attention has been paid to tight frames with prescribed norms and general frames with both prescribed norms and frame operator.  In the case of tight frames, the answer, the so-called ``fundamental frame inequality," was provided by Casazza, Fickus, Kova\v{c}evi\'c, Leon, and Tremain:  

\begin{theorem}[\cite{CFKLT06}, Corollary 4.11] \label{fundineq}
Given real numbers $a_1 \geq a_2 \geq \ldots \geq a_K > 0$, $K\geq d$, there exists a $\lambda$-tight frame $\{x_j\}_{j=1}^K$ for a $d$-dimensional Hilbert space $\H^d$ with prescribed norms $\|x_j\|^2 = a_j$ for $1\leq j \leq K$ if and only if 
\[ \lambda = \frac{1}{d} \sum_{j=1}^K a_j \geq a_1.\]
\end{theorem}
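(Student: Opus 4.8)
The plan is to prove necessity by a trace identity together with a pointwise eigenvalue estimate, and to prove sufficiency by realizing the desired frame as a factorization of a Gram matrix whose existence is guaranteed by the Schur--Horn theorem.

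For necessity, suppose $\{x_j\}_{j=1}^K$ is a $\lambda$-tight frame with $\|x_j\|^2=a_j$. Then its frame operator is $S=\lambda I_d=\Theta\Theta^*$. Since $\Tr(\Theta\Theta^*)=\Tr(\Theta^*\Theta)$ and the Gram matrix $\Theta^*\Theta$ has diagonal entries $\la x_j,x_j\ra=a_j$, taking traces gives $\lambda d=\sum_{j=1}^K a_j$, which is the stated formula for $\lambda$. For the inequality, I would evaluate $\la Sx_1,x_1\ra$ in two ways: on one hand it equals $\lambda\|x_1\|^2=\lambda a_1$, and on the other, expanding $S$ gives $\sum_{k=1}^K|\la x_1,x_k\ra|^2\geq|\la x_1,x_1\ra|^2=a_1^2$. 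Hence $\lambda a_1\geq a_1^2$, so $\lambda\geq a_1$.

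For sufficiency --- the harder direction --- I would reduce the construction to a question about Gram matrices. A frame $\{x_j\}_{j=1}^K$ for $\H^d$ with frame operator $\lambda I_d$ and prescribed norms $\|x_j\|^2=a_j$ exists if and only if there is a $K\times K$ positive semidefinite matrix $G$ with diagonal $(a_1,\dots,a_K)$ whose nonzero spectrum is $\lambda$ with multiplicity $d$; indeed, $G$ and $S$ are $\Theta^*\Theta$ and $\Theta\Theta^*$ and share their nonzero eigenvalues, so such a $G$ factors as $G=\Theta^*\Theta$ with $\Theta$ a $d\times K$ matrix whose columns are the sought vectors. The existence of a positive semidefinite matrix with prescribed diagonal $(a_1,\dots,a_K)$ and prescribed eigenvalues $(\underbrace{\lambda,\dots,\lambda}_{d},0,\dots,0)$ is exactly the content of the Schur--Horn theorem: such a matrix exists if and only if the diagonal is majorized by the eigenvalue sequence.

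The main obstacle is verifying this majorization condition and recognizing that it collapses to the single inequality $\lambda\geq a_1$. The equal-trace requirement $\sum_{j=1}^K a_j=\lambda d$ is precisely our hypothesis. The remaining partial-sum inequalities $\sum_{i=1}^m a_i\leq\sum_{i=1}^m\mu_i$ are automatic when $m>d$, since the right side already equals the full sum $\lambda d$ while the left side omits the positive terms $a_{m+1},\dots,a_K$; and for $m\leq d$ they read $\tfrac{1}{m}\sum_{i=1}^m a_i\leq\lambda$. Because $a_1\geq a_2\geq\cdots\geq a_K$, these running averages are nonincreasing in $m$ and are maximized at $m=1$, where the average equals $a_1$; thus the entire family of majorization inequalities follows from the single condition $a_1\leq\lambda$, which is exactly the given hypothesis. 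Applying Schur--Horn and factoring $G=\Theta^*\Theta$ then produces the required $\lambda$-tight frame, completing the argument.
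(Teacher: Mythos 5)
Your proof is correct, but there is no in-paper argument to compare it against: Theorem~\ref{fundineq} is quoted as background from \cite{CFKLT06} and is never proved in this paper. The closest the paper comes is its remark that the generalization, Theorem~\ref{givenfo}, ``can be seen as a consequence of the classical Schur--Horn theorem'' (citing \cite{BJ15}); your argument is exactly that route specialized to $S=\lambda I_d$, so it matches the perspective the paper gestures at, even though it differs from the frame-potential argument of the original source \cite{CFKLT06}. On the details: necessity is fine (the trace identity gives $\lambda d=\sum_{j} a_j$, and $\lambda a_1=\langle Sx_1,x_1\rangle=\sum_{k}|\langle x_1,x_k\rangle|^2\geq a_1^2$ with $a_1>0$ gives $\lambda\geq a_1$). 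For sufficiency, the reduction to Gram matrices is sound: a positive semidefinite $K\times K$ matrix $G$ of rank $d$ factors as $G=\Theta^*\Theta$ with $\Theta$ of size $d\times K$, the nonzero spectra of $\Theta^*\Theta$ and $\Theta\Theta^*$ coincide with multiplicity, and a $d\times d$ self-adjoint matrix all of whose eigenvalues equal $\lambda$ must be $\lambda I_d$. Your collapse of the majorization conditions to the single inequality $a_1\leq\lambda$ is also right: the partial-sum inequalities for $m>d$ follow from positivity of the $a_j$, and for $m\leq d$ the running averages of a nonincreasing sequence are maximized at $m=1$. What the Schur--Horn route buys is a short, conceptually transparent proof that generalizes immediately to arbitrary prescribed frame operators (Theorem~\ref{givenfo}); its cost is that Horn's converse---the existence of a Hermitian matrix with prescribed eigenvalues and prescribed majorized diagonal---is itself a nontrivial theorem invoked as a black box.
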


Casazza and Leon generalized this result to frames with prescribed frame operators (the classical case is when $S = \lambda I_d$):

\begin{theorem} [\cite{CL10}, Theorem 2.1] \label{givenfo}
Let $S$ be a positive self-adjoint operator on a $d$-dimensional Hilbert space $\H^d$ with eigenvalues $\lambda_1\geq \lambda_2 \geq \ldots \geq \lambda_d > 0$.  Given real numbers $a_1 \geq a_2 \geq \ldots \geq a_K > 0$, $K\geq d$, there is a frame $\{x_j\}_{j=1}^K$ for $\H^d$ with frame operator $S$ and $\|x_j\|^2 = a_j$ for all $1\leq j \leq K$ if and only if
\[ \sum_{j=1}^K a_j = \sum_{j=1}^d \lambda_j   \quad \mbox{and} \quad  \sum_{j=1}^k a_j \leq \sum_{j=1}^k \lambda_j \]
for every $1\leq k \leq d$.
\end{theorem}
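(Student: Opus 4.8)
The plan is to pass from the frame to its Gram matrix and reduce the statement to the Schur--Horn theorem relating the diagonal entries and the eigenvalues of a Hermitian matrix. The crucial observation is that if $\{x_j\}_{j=1}^K$ is a frame for $\H^d$ with synthesis operator $\Theta:\C^K\to\H^d$, then its frame operator $S=\Theta\Theta^*$ and its Gram matrix $G=\Theta^*\Theta$ (the $K\times K$ matrix with entries $\la x_j,x_i\ra$) have the same nonzero eigenvalues, counted with multiplicity. Since $S$ is invertible with eigenvalues $\lambda_1\geq\cdots\geq\lambda_d>0$, the matrix $G$ is positive semidefinite of rank $d$, its nonzero eigenvalues are exactly $\lambda_1,\ldots,\lambda_d$, and its diagonal entries are the prescribed squared norms $(\Theta^*\Theta)_{ii}=\|x_i\|^2=a_i$. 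Thus the existence of the desired frame is equivalent to the existence of a $K\times K$ positive semidefinite matrix whose eigenvalue list is $(\lambda_1,\ldots,\lambda_d,0,\ldots,0)$ and whose diagonal is $(a_1,\ldots,a_K)$.

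First I would make this equivalence precise in both directions. Given such a matrix $G$, factor it as $G=V^*V$ with $V$ a $d\times K$ matrix (possible because $\rank G=d$); the columns of $V$ then form a frame for $\H^d$ with Gram matrix $G$, hence with the correct norms, and with frame operator $VV^*$ whose eigenvalues are $\lambda_1,\ldots,\lambda_d$. Since $VV^*$ and $S$ are positive self-adjoint operators with the same spectrum, there is a unitary $U$ with $S=U(VV^*)U^*$; replacing each column $v_j$ by $Uv_j$ preserves all norms and produces a frame with frame operator exactly $S$. This unitary-rotation step is what upgrades ``same eigenvalues as $S$'' to ``equal to $S$,'' and it is a point that must be handled with care.

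It then remains to recognize the stated conditions as the hypotheses of the Schur--Horn theorem. Writing the eigenvalue list in decreasing order as $(\lambda_1,\ldots,\lambda_d,0,\ldots,0)$ and the already-ordered diagonal as $(a_1,\ldots,a_K)$, the majorization condition reads $\sum_{j=1}^k a_j\leq\sum_{j=1}^k\lambda_j$ for $1\leq k\leq d$, together with equality of traces $\sum_{j=1}^K a_j=\sum_{j=1}^d\lambda_j$; for $k>d$ the corresponding inequality is automatic, since the partial sums of the eigenvalues no longer increase while the $a_j$ are positive. These are precisely the two conditions in the statement.

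For the proof itself, the ``only if'' direction is the soft half: any frame yields the Gram matrix $G$ above, and Schur's classical inequality that the diagonal of a Hermitian matrix is majorized by its spectrum gives the inequalities directly, while $\sum_{j=1}^K a_j=\Tr G=\sum_{j=1}^d\lambda_j$ gives the trace equality. The ``if'' direction is where the real work lies: I would invoke Horn's converse to produce a Hermitian (indeed positive semidefinite, since its eigenvalues are nonnegative) matrix with the prescribed spectrum and diagonal, and then apply the factorization and rotation of the second paragraph. The main obstacle is thus the constructive content of Horn's theorem---building a matrix with prescribed diagonal and eigenvalues from the majorization data---which one either cites or reproves via a sequence of plane (Givens) rotations that adjust two diagonal entries at a time while fixing the spectrum.
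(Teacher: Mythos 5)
Your proposal is correct, and it takes the same route the paper itself indicates: this theorem is quoted as background from the Casazza--Leon paper (reference [CL10]) and is not proved here, but the sentence immediately following it—``This can be seen as a consequence of the classical Schur--Horn theorem''—is exactly your reduction via the Gram matrix, including the unitary-rotation step and the observation that the majorization inequalities for $k>d$ are automatic since the $a_j$ are positive. Nothing in your argument conflicts with the paper, and the details (same nonzero spectrum of $\Theta\Theta^*$ and $\Theta^*\Theta$, factorization $G=V^*V$, Horn's converse for the existence half) are handled correctly.
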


This can be seen as a consequence of the classical Schur-Horn theorem (\cite{BJ15}).  In \cite{CFMPS13}, Cahill, Fickus, Mixon, Poteet, and Strawn introduce a so-called {\em eigenstep} method for constructing all frames with a given frame operator and set of norms (see also \cite{FMPS13}, and \cite{BJ15} for a survey of the topic).

A different approach was taken by Christensen, Powell, and Xiao in \cite{CPX12}, extending Theorem \ref{fundineq} to the setting of dual frame pairs.  Given a frame $\{x_j\}_{j=1}^K$, a sequence $\{y_j\}_{j=1}^K$ is called a {\em dual frame} if for every $y\in\H^d$,
\[ y = \sum_{j=1}^K \la y, \, x_j \ra \, y_j = \sum_{j=1}^K \la y, \, y_j \ra \, x_j. \]

\begin{theorem}[\cite{CPX12}, Theorem 3.1] \label{dualframes}
Given a sequence of numbers $\{a_j\}_{j=1}^K \subset \H$ with $K>d$, the following are equivalent:
\begin{enumerate}
  \item There exist dual frames $\{x_j\}_{j=1}^K$ and $\{y_j\}_{j=1}^K$ for $\H^d$ such that $\la x_j, \, y_j \ra = a_j$ for all $1\leq j \leq K$.
  \item There exists a tight frame $\{x_j\}_{j=1}^K$ and dual frame $\{y_j\}_{j=1}^K$ for $\H^d$ such that $\la x_j, \, y_j \ra = a_j$ for all $1\leq j \leq K$.
  \item $\sum_{j=1}^K a_j = d$.
\end{enumerate}
\end{theorem}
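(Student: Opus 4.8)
My plan is to prove the three statements equivalent by establishing the cycle $(3)\Rightarrow(2)\Rightarrow(1)\Rightarrow(3)$. The middle implication $(2)\Rightarrow(1)$ is free, since a tight frame is in particular a frame and the dual frame is carried along unchanged. For $(1)\Rightarrow(3)$ I would use a trace computation: if $\{x_j\}$ and $\{y_j\}$ are dual frames, their synthesis operators satisfy $\Theta_y\Theta_x^* = I_d$, which is just the reconstruction formula $y=\sum_j \la y,x_j\ra y_j$. Since $\Tr(\Theta_y\Theta_x^*) = \sum_{j=1}^K \la y_j, x_j\ra$ and $\la y_j, x_j\ra = \overline{a_j}$, taking traces gives $\sum_j \overline{a_j} = d$, hence $\sum_j a_j = d$ because $d$ is real.

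The real content is $(3)\Rightarrow(2)$, which I would prove by an explicit perturbation construction. Fix a unit-norm tight frame $\{x_j\}_{j=1}^K$ for $\H^d$; its frame operator is $\tfrac Kd I_d$ and its canonical dual is $\{\tfrac dK x_j\}$. I look for the desired dual in the form $y_j = \tfrac dK x_j + u_j$. Because the canonical dual already satisfies the reconstruction formula, requiring $\{y_j\}$ to be a dual frame reduces to the single matrix condition $\sum_j u_j x_j^* = 0_d$, while the prescribed dot product $\la x_j, y_j\ra = a_j$ becomes $\la x_j, u_j\ra = a_j - \tfrac dK =: b_j$. Hypothesis $(3)$ makes the targets add up correctly: $\sum_j b_j = \sum_j a_j - d = 0$. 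Writing $u_j = \overline{b_j}\,x_j + w_j$ with $w_j\perp x_j$ meets the inner-product constraints automatically, and the dual-frame condition collapses to
\[ \sum_{j=1}^K w_j x_j^* = -M, \qquad M := \sum_{j=1}^K \overline{b_j}\, x_j x_j^*, \]
where $\Tr(M) = \sum_j \overline{b_j} = 0$.

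Everything therefore comes down to solving this last linear system, and I expect its solvability to be the main obstacle. The point is to show that the linear map $T(\{w_j\}) = \sum_j w_j x_j^*$, defined on tuples with $w_j\perp x_j$, maps onto the space of all trace-zero $d\times d$ matrices (the image lands there since $\Tr(w_j x_j^*) = \la w_j, x_j\ra = 0$, and $-M$ is trace-zero). I would establish surjectivity by computing the annihilator of the image under the Frobenius inner product: a matrix $A$ annihilates every $\sum_j w_j x_j^*$ with $w_j\perp x_j$ precisely when $\la A x_j, w_j\ra = 0$ for all admissible $w_j$, i.e. when each $x_j$ is an eigenvector of $A$. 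Choosing the tight frame to be full spark—possible because $K>d$ guarantees $K\ge d+1$—forces $A$ to have $d+1$ eigenvectors in general position and hence to be a scalar multiple of $I_d$. Thus the annihilator is exactly $\Span\{I_d\}$, so the image of $T$ is all of the trace-zero matrices; selecting $\{w_j\}$ with $T(\{w_j\}) = -M$ finishes the construction. The resulting $\{y_j\}$ is automatically a frame, since it satisfies the reconstruction formula, and it is a dual of the tight frame $\{x_j\}$ with $\la x_j, y_j\ra = a_j$ by design.
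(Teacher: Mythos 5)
A preliminary remark: this paper never proves Theorem \ref{dualframes} --- it is quoted as background from \cite{CPX12}, and what the paper actually proves is the binary analog (Theorem \ref{getalpha}). So your proposal can only be judged on its own merits and against the paper's technique for that analog. Your overall architecture is sound: $(2)\Rightarrow(1)$ is indeed free; the trace computation for $(1)\Rightarrow(3)$ is correct, and is in fact exactly the argument the paper uses for the necessity halves of Theorems \ref{getalpha} and \ref{necessity}; and your reduction of $(3)\Rightarrow(2)$ --- fix a unit-norm tight frame, perturb its canonical dual inside the affine space of duals (the classical counterpart of Proposition \ref{allduals}), split $u_j = \overline{b_j}x_j + w_j$ with $w_j\perp x_j$, and identify the image of $T$ via its Frobenius annihilator --- is correctly set up, including the observation that the image is all trace-zero matrices precisely when the only matrices admitting every $x_j$ as an eigenvector are the scalar ones.

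The genuine gap is the existence of the frame your argument needs. The parenthetical ``possible because $K>d$ guarantees $K\ge d+1$'' justifies only why full spark would \emph{suffice} (any $d+1$ vectors, every $d$ of which are independent, force $A$ to be scalar); it is not an argument that a full spark \emph{unit-norm tight} frame of $K$ vectors exists in $\H^d$. That existence is a real theorem (Alexeev--Cahill--Mixon, ``Full spark frames''): over $\C^d$ it is easy, since selecting $d$ rows of the $K\times K$ DFT gives a unit-norm tight frame whose $d\times d$ minors are Vandermonde and hence nonzero, but over $\R^d$ --- which the statement also covers --- it is not obvious and needs either a citation or a construction. The hypothesis is load-bearing, not cosmetic: for $d=2$, $K=4$, the unit-norm tight frame $x_1=x_2=e_1$, $x_3=x_4=e_2$ has annihilator equal to all diagonal matrices, so the image of $T$ is a proper subspace of the trace-zero matrices, and the sequence $a=(1,1,0,0)$, which satisfies $(3)$, is genuinely unattainable for this choice of $\{x_j\}$: any dual of it satisfies $y_1+y_2=e_1$, hence
\begin{equation*}
a_1+a_2=\la e_1,\,y_1+y_2\ra=\la e_1,\,e_1\ra=1\neq 2 .
\end{equation*}
So the proof is incomplete at exactly the step you flagged as the crux. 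It becomes complete once you cite (or prove) full spark unit-norm tight frame existence; alternatively, your eigenvector argument only needs a unit-norm tight frame containing a basis together with one further vector all of whose coordinates in that basis are nonzero, but even that weaker configuration requires an explicit construction in the real case.
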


The goal of this paper is to extend the theory of frames with prescribed norms (or inner products) from the classical Hilbert spaces of $\C^d$ and $\R^d$ to the binary space $\Z_2^d$.  We provide analogs of Theorems \ref{fundineq}, \ref{givenfo}, and \ref{dualframes} for binary frames.  The challenge, of course, is the lack of an inner product, positive elements, and guaranteed eigenvalues.  Section 2 contains background material on binary frames.  In Section 3, we explore dual binary frames and prove the binary version of Theorems \ref{dualframes} and \ref{fundineq}.  In Section 4, we construct binary frames with prescribed ``norms" and frame operator, as an analog to Theorem \ref{givenfo}.  We conclude in Section 5 with examples and a catalog.

\section{Binary Frames}

In \cite{BLRTT09}, Bodmann, Le, Reza, Tobin, and Tomforde introduce a theory of frames over the $d$-dimensional binary space $\Z_2^d$ where $\Z_2^d$ is the direct product $\Z_2 \oplus \cdots \oplus \Z_2$ having $d \geq 1$ copies of $\Z_2$. The main trouble in defining frames in a binary space stems from the lack of an ordering on $\Z_2$. Without an order, there can be no inner product defined for binary space. In spite of this, \cite{BLRTT09} establishes the dot product as the analog of the inner product on $\R^d$ and $\C^d$.

\begin{definition} [\cite{BLRTT09}] \label{dotproduct}
The {\em dot product} on $\Z_2^d$ is defined as the map $(\cdot, \cdot): \Z_2^d \times \Z_2^d \rightarrow \Z_2$ given by
	\begin{equation*}
	(a, b) = \sum_{n=1}^d a[n]b[n].
	\end{equation*}
\end{definition}

Due to the degenerate nature of the dot product (note that $(a,a) = 0$ need not imply $a=0$), it fails to help define a frame in the manner of Definition \ref{framedef1}.  However, when working over finite-dimensional spaces in the classical case, a frame is merely a spanning sequence of vectors. This motivates the definition of a frame in binary space.

\begin{definition} [\cite{BLRTT09}] \label{framedef2}
A {\em frame} is a sequence of vectors $\CF = \{ f_j\}_{j=1}^K$ in $\Z_2^d$ such that $\Span(\CF) = \Z_2^d$.
\end{definition}

The synthesis, analysis, and frame operators of $\CF$ are defined similarly as in Definition \ref{opsdef} and are denoted $\Theta_{\CF}, \Theta_{\CF}^*,$ and $S_{\CF}$, respectively.

\begin{definition} [\cite{BLRTT09}]
The {\em synthesis operator} of a frame $\CF = \{ f_j\}_{j=1}^K$ is the $d\times K$ matrix whose $i^{\mbox{\tiny th}}$ column is the $i^{\mbox{\tiny th}}$ vector in $\CF$. The {\em analysis operator} $\Theta_{\CF}^*$ is the transpose of the synthesis operator.  Explicitly,
\[\Theta_{\CF} = \left[ \begin{array}{c c c} | & & | \\ f_{1} & \cdots & f_{K}\\ | & & | \end{array} \right] 
\quad\quad
\mbox{and}
\quad\quad
\Theta_{\CF}^* = \left[ \begin{array}{c}  \mbox{\textemdash \,}  f_{1}^*  \mbox{\,\textemdash} \\ \vdots \\  \mbox{\textemdash \,}   f_{K}^* \mbox{\,\textemdash}  \end{array} \right]. \]
The {\em frame operator} is $S_{\CF} = \Theta_{\CF} \Theta_{\CF}^*$.
\end{definition}

It is demonstrated in \cite{BLRTT09} that the spanning property of $\CF$ is necessary and sufficient for $\CF$ to have a reconstruction identity with a dual family $\CG$. This fact is summed up in the following theorem and is shown by choosing a basis consisting of $d$ vectors in $\CF$ (without loss of generality, assumed to be $f_1, \ldots, f_d$) and applying the Riesz Representation Theorem to the linear functionals $\gamma_i$ defined by $\gamma_i(f_j) = \delta_{ij}$.

\begin{theorem}[\cite{BLRTT09}, Theorem 2.4] 
The family $\CF = \{f_j\}_{j=1}^K$ in $\Z_2^d$ is a frame if and only if there exist vectors $\CG = \{g_j\}_{j=1}^K$ such that for all $y\in\Z_2^d$, 
\begin{equation} \label{reconstruction}
 y = \sum_{j=1}^K(y,g_j) f_j. 
\end{equation}
\end{theorem}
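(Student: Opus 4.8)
The plan is to prove the two implications separately, beginning with the easier reverse direction. Suppose there exist vectors $\CG = \{g_j\}_{j=1}^K$ satisfying the reconstruction identity \eqref{reconstruction}. Then for any $y \in \Z_2^d$, the expression $y = \sum_{j=1}^K (y, g_j) f_j$ exhibits $y$ as a $\Z_2$-linear combination of the $f_j$, since each scalar $(y, g_j)$ lies in $\Z_2$. Hence $y \in \Span(\CF)$, and as $y$ was arbitrary, $\Span(\CF) = \Z_2^d$, so $\CF$ is a frame.

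For the forward direction, assume $\CF$ is a frame, so that $\Span(\CF) = \Z_2^d$. First I would extract from $\CF$ a basis of $\Z_2^d$; after reindexing I may assume this basis is $\{f_1, \ldots, f_d\}$. I would then introduce the coordinate functionals $\gamma_i \colon \Z_2^d \to \Z_2$, for $1 \le i \le d$, determined by $\gamma_i(f_j) = \delta_{ij}$ on the chosen basis vectors. These are well-defined linear functionals precisely because $\{f_1, \ldots, f_d\}$ is a basis, and they constitute the dual basis.

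The crux is to represent each $\gamma_i$ through the dot product, that is, to produce $g_i \in \Z_2^d$ with $\gamma_i(y) = (y, g_i)$ for all $y$. This is where the Riesz Representation Theorem enters, and it is the step requiring care: although the dot product is ``degenerate'' in the sense that $(a,a)=0$ does not force $a=0$, it is nevertheless a \emph{nondegenerate} symmetric bilinear form, since its Gram matrix in the standard basis is $I_d$ and so the only vector orthogonal to all of $\Z_2^d$ is $0$. Consequently the map $\Z_2^d \to (\Z_2^d)^*$, $g \mapsto (\cdot\,, g)$, is an injective linear map between spaces of equal finite dimension, hence an isomorphism, which yields the desired $g_1, \ldots, g_d$.

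Finally I would verify \eqref{reconstruction}. Setting $g_j = 0$ for $d < j \le K$, I expand an arbitrary $y$ in the basis as $y = \sum_{i=1}^d c_i f_i$; applying $\gamma_i$ gives $c_i = \gamma_i(y) = (y, g_i)$, whence $\sum_{j=1}^K (y, g_j) f_j = \sum_{i=1}^d (y, g_i) f_i = \sum_{i=1}^d c_i f_i = y$. I expect the only genuine obstacle to be the justification that Riesz representation survives the passage to $\Z_2$; once the nondegeneracy of the dot product is in hand, the remainder is a routine dual-basis computation.
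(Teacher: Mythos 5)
Your proof is correct and follows essentially the same route as the paper's (which sketches the argument of \cite{BLRTT09}): extract a basis $\{f_1,\ldots,f_d\}$ from $\CF$, apply the Riesz Representation Theorem to the dual-basis functionals $\gamma_i$ with $\gamma_i(f_j)=\delta_{ij}$ to obtain $g_1,\ldots,g_d$, and set $g_j=0$ for $d<j\le K$. Your only addition is to spell out why Riesz representation holds over $\Z_2$ --- the dot product is a nondegenerate bilinear form even though the associated quadratic form is degenerate --- a detail the paper delegates to the cited reference.
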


In the proof, $g_i$ is defined as the unique vector satisfying $\gamma_i(y) = (y, g_i)$ for every $y$ for $1\leq i\leq d$, and $g_i = 0$ for $d < i \leq K$.  Equation (\ref{reconstruction}) can be rewritten as \[ \Theta_{\CF} \Theta_{\CG}^* = I_d, \]
which is equivalent to $\Theta_{\CG} \Theta_{\CF}^* = I_d$.  Consequently, $\CG$ is a dual frame to $\CF$.  We will refer to the dual frame $\CG$ as a {\em natural dual} to $\CF$.  Note that this definition is unrelated to the usual definition of the canonical dual in $\C^d$ or $\R^d$ as $\{S^{-1}(f_j)\}$, where $S = \Theta\Theta^*$ is the frame operator from Definition \ref{opsdef}.  Although $S_{\CF}$ is no longer necessarily invertible, we still have \[ \sum_{j=1}^d (g_i, f_j) f_j = \sum_{j=1}^d \delta_{ij} f_j = f_i \] for $i\leq d$.

Propositions \ref{allduals} and \ref{uniquedual} make clear that the natural dual frame is unique, up to permutation, if and only if $K=d$.

\begin{prop} \label{allduals}
Let $\CF = \{f_j\}_{j=1}^K$ be a frame for $\Z_2^d$ with a natural dual frame $\CG$.  Then $\CH$ is a dual frame of $\CF$ if and only if $\Theta_{\CH}^* = \Theta_{\CG}^* + C$ for some $K\times d$ matrix $C$ satisfying $\Theta_{\CF}C = 0_d$.
\end{prop}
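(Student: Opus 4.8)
The plan is to reduce this statement about dual frames to a purely matrix-theoretic identity, after which both implications follow by direct computation. The key observation is that, just as equation (\ref{reconstruction}) for $\CG$ was rewritten as $\Theta_{\CF}\Theta_{\CG}^* = I_d$, the condition that $\CH = \{h_j\}_{j=1}^K$ be a dual frame of $\CF$ is precisely the matrix equation $\Theta_{\CF}\Theta_{\CH}^* = I_d$. So the proposition amounts to describing the full solution set of this equation in the unknown $\Theta_{\CH}^*$, given that $\Theta_{\CG}^*$ is one particular solution.

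For the forward direction, I would assume $\CH$ is a dual frame and set $C := \Theta_{\CH}^* + \Theta_{\CG}^*$ (recall that over $\Z_2$ subtraction and addition coincide). This $K\times d$ matrix satisfies $\Theta_{\CH}^* = \Theta_{\CG}^* + C$ by construction, and using that matrix multiplication distributes over addition together with $\Theta_{\CF}\Theta_{\CH}^* = I_d$ and $\Theta_{\CF}\Theta_{\CG}^* = I_d$, I would compute
\[ \Theta_{\CF} C = \Theta_{\CF}\Theta_{\CH}^* + \Theta_{\CF}\Theta_{\CG}^* = I_d + I_d = 0_d, \]
which gives the required factorization.

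For the reverse direction, I would assume $\Theta_{\CH}^* = \Theta_{\CG}^* + C$ with $\Theta_{\CF} C = 0_d$ and compute $\Theta_{\CF}\Theta_{\CH}^* = \Theta_{\CF}\Theta_{\CG}^* + \Theta_{\CF}C = I_d + 0_d = I_d$. This is exactly the reconstruction identity (\ref{reconstruction}) with $\CH$ in place of $\CG$. To conclude that $\CH$ is genuinely a dual frame, I would still need to check that $\CH$ is itself a frame, i.e. that $\Span(\CH) = \Z_2^d$; but $\Theta_{\CF}\Theta_{\CH}^* = I_d$ forces $\rank(\Theta_{\CH}) = d$, so the columns of $\Theta_{\CH}$ span $\Z_2^d$ and $\CH$ is a frame.

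There is no serious obstacle here: once the reconstruction identity is phrased as the matrix equation $\Theta_{\CF}\Theta_{\CH}^* = I_d$, the result is the standard fact that the solution set of an inhomogeneous linear system is a coset of the solution set of the associated homogeneous system, here $\{C : \Theta_{\CF}C = 0_d\}$. The only points requiring care are the conventions over $\Z_2$ (so that the ``difference'' $C$ is a legitimate $\Z_2$-matrix and signs play no role) and verifying the spanning property of $\CH$ in the reverse direction, both of which are routine.
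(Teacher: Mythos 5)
Your proof is correct and follows essentially the same route as the paper: both directions reduce the dual-frame condition to the matrix equation $\Theta_{\CF}\Theta_{\CH}^* = I_d$ and then use the coset structure of its solution set, with $C = \Theta_{\CH}^* + \Theta_{\CG}^*$ in the forward direction. Your additional verification that $\CH$ spans $\Z_2^d$ (via the rank argument) is a nice point of care that the paper's proof leaves implicit, but it does not change the substance of the argument.
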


\begin{proof}
Given the existence of a matrix $C$ with $\Theta_{\CH}^* = \Theta_{\CG}^* + C$ and $\Theta_{\CF}C = 0_d$, it is immediate that $\Theta_{\CF} \Theta_{\CH}^* = \Theta_{\CF} \Theta_{\CG}^* = I_d$.  Conversely, if $\CH$ is a dual frame of $\CF$, then letting $C = \Theta_{\CH}^* - \Theta_{\CG}^*$ gives 
$\Theta_{\CF}C = \Theta_{\CF}\Theta_{\CH}^* - \Theta_{\CF}\Theta_{\CG}^* = I_d - I_d = 0_d$.
\end{proof}

As in $\R^d$ and $\C^d$ (see \cite{HKLW07}, Proposition 6.3), the following result still holds in $\Z_2^d$; however, since the proof in \cite{HKLW07} uses the invertibility of the frame operator, we provide a modified proof here.

\begin{prop} \label{uniquedual}
Let $\CF = \{f_j\}_{j=1}^K$ be a frame for $\Z_2^d$.  Then $\CF$ has a unique dual frame if and only if $\CF$ is a basis.
\end{prop}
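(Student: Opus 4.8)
The plan is to reduce the uniqueness of the dual frame to a statement about the kernel of the synthesis operator $\Theta_{\CF}$, using the parametrization of all duals provided by Proposition \ref{allduals}, and then to read off that kernel directly from the spanning hypothesis. Fix a natural dual $\CG$ of $\CF$. By Proposition \ref{allduals}, the dual frames of $\CF$ are exactly the families $\CH$ with $\Theta_{\CH}^* = \Theta_{\CG}^* + C$, where $C$ ranges over the $K\times d$ matrices satisfying $\Theta_{\CF}C = 0_d$. Hence $\CF$ has a unique dual frame if and only if the zero matrix is the only such $C$. My first step is to translate this into a condition on $\Theta_{\CF}$ viewed as a linear map $\Z_2^K \to \Z_2^d$: since $\Theta_{\CF}C = 0_d$ holds precisely when every column of $C$ lies in $\ker(\Theta_{\CF})$, a nonzero admissible $C$ exists if and only if $\ker(\Theta_{\CF}) \neq \{0\}$. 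Therefore uniqueness of the dual is equivalent to $\ker(\Theta_{\CF}) = \{0\}$, i.e. to injectivity of $\Theta_{\CF}$.

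The second step is to compute this kernel. Because $\Span(\CF) = \Z_2^d$, the synthesis operator $\Theta_{\CF}$ is surjective, so $\rank(\Theta_{\CF}) = d$. Applying rank-nullity over the field $\Z_2$ gives $\dim \ker(\Theta_{\CF}) = K - d$, so $\ker(\Theta_{\CF}) = \{0\}$ if and only if $K = d$. Since a spanning set of $d$ vectors in the $d$-dimensional space $\Z_2^d$ is automatically linearly independent, $K = d$ is equivalent to $\CF$ being a basis. Chaining the equivalences yields that $\CF$ has a unique dual frame if and only if $\CF$ is a basis. If one prefers to argue both directions explicitly: when $\CF$ is a basis, $\Theta_{\CF}$ is a square matrix of full rank, hence invertible, which forces $C = 0$ and uniqueness; conversely, uniqueness forces $\ker(\Theta_{\CF}) = \{0\}$ and thus $K = d$.

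The main subtlety — and the one place I would be careful — is that the admissible object $C$ in Proposition \ref{allduals} is a $K\times d$ matrix rather than a single vector, so I must verify that the existence of a nonzero such matrix is genuinely equivalent to the nontriviality of $\ker(\Theta_{\CF})$. This is exactly the column-by-column observation above: given a nonzero $c \in \ker(\Theta_{\CF})$ one produces an admissible $C \neq 0$ by placing $c$ in one column and zeros elsewhere, while any admissible $C \neq 0$ has a nonzero column lying in $\ker(\Theta_{\CF})$. Everything else is standard linear algebra over $\Z_2$, and it is worth emphasizing that this argument never invokes invertibility of the frame operator $S_{\CF}$ — precisely the ingredient unavailable in the binary setting and the reason the classical proof in \cite{HKLW07} must be modified here.
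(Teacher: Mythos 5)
Your proof is correct and follows essentially the same route as the paper's: both reduce uniqueness of the dual, via Proposition \ref{allduals}, to the statement that $C = 0$ is the only $K\times d$ matrix with $\Theta_{\CF}C = 0_d$, and then identify this with $\CF$ being a basis. Your rank-nullity computation and column-by-column kernel argument simply spell out in more detail the equivalence the paper states in one line (linear independence of the $f_j$ together with $K=d$), so there is no substantive difference in approach.
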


\begin{proof}
Since a frame is a spanning set, $\CF$ is a basis if and only if the vectors $f_j$ are linearly independent and $K=d$.  This is equivalent to the only $K\times d$ matrix $C$ satisfying $\Theta_{\CF}C = 0_d$ being the zero matrix.  By Proposition \ref{allduals}, this happens if and only if the (unique choice of) natural dual $\CG$ is the only dual frame of $\CF$. 
\end{proof}

The diagonal of the {\em Gramian} matrix $\Theta_{\CF}^*\Theta_{\CF}$ is the vector whose $i^{\mbox{\tiny th}}$ entry is $(f_i,f_i)$; when $\CF$ and $\CH$ are a dual frame pair, the diagonal of the {\em cross-Gramian} matrix $\Theta_{\CF}^*\Theta_{\CH}$ is the vector whose $i^{\mbox{\tiny th}}$ element is $(h_i, f_i)$.

\begin{definition}[\cite{BLRTT09}]
A {\em Parseval frame} for $\Z_2^d$ is a sequence of vectors $\CF = \{f_j\}_{j=1}^K \subset \Z_2^d$ such that 
\[ y = \sum_{j=1}^K (y, f_j) f_j \]
for all $y\in\Z_2^d$.
\end{definition}

Note that a binary Parseval frame must be a binary frame.  In matrix notation, $\CF$ is a Parseval frame for $\Z_2^d$ if and only if $\Theta_{\CF}\Theta_{\CF}^* = I_d$.  If a collection of vectors $\{x_j\} \subset \Z_2^d$ satisfies $(x_i,x_j) = 0$ for all $i\neq j$ and $(x_i,x_i) = 1$ for all $i$, we say, through a slight abuse of terminology, that $\{x_j\}$ is an orthonormal set.  An easy, matrix theoretical consequence of the definitions of frame and Parseval frame is the following proposition:

\begin{prop} \label{framecols}
Let $\CF = \{f_j\}_{j=1}^K$ be a sequence of vectors in $\Z_2^d$.
\begin{enumerate}
  \item The rows of $\Theta_{\CF}$ are linearly independent if and only if $\CF$ is a frame.
  \item The rows of $\Theta_{\CF}$ are orthonormal if and only if $\CF$ is a Parseval frame.
\end{enumerate}
\end{prop}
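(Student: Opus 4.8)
The plan is to reduce both statements to elementary facts about the matrix $\Theta_{\CF}$, exactly as the ``matrix theoretical'' framing suggests. For part (1), I would start from Definition \ref{framedef2}: $\CF$ is a frame precisely when its columns span $\Z_2^d$, i.e.\ when the column space of the $d\times K$ matrix $\Theta_{\CF}$ is all of $\Z_2^d$, which is to say $\rank \Theta_{\CF} = d$. The key linear-algebra input is that row rank equals column rank over \emph{any} field, including $\Z_2$; since $\Theta_{\CF}$ has exactly $d$ rows, the row rank equals $d$ if and only if those $d$ rows are linearly independent. Chaining these equivalences gives part (1).

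For part (2), I would compute the entries of the Gram-type product $\Theta_{\CF}\Theta_{\CF}^*$ directly. Because $\Theta_{\CF}^*$ is the transpose of $\Theta_{\CF}$, the $(i,k)$ entry of $\Theta_{\CF}\Theta_{\CF}^*$ is $\sum_{j=1}^K (\Theta_{\CF})_{ij}(\Theta_{\CF})_{kj}$, i.e.\ the dot product $(r_i, r_k)$ of the $i$-th and $k$-th rows of $\Theta_{\CF}$. Hence $\Theta_{\CF}\Theta_{\CF}^* = I_d$ if and only if $(r_i,r_k) = \delta_{ik}$ for all $i,k$, which is exactly the statement that the rows are orthonormal. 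Combining this with the already-noted matrix characterization that $\CF$ is a Parseval frame if and only if $\Theta_{\CF}\Theta_{\CF}^* = I_d$ yields the claim.

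The main thing to watch is the degeneracy of the binary dot product, which is where the classical proof (relying on positive-definiteness or invertibility of the frame operator) could fail. The concern is whether ``orthonormal rows'' genuinely forces $\CF$ to be a frame, given that $(a,a)=0$ need not imply $a=0$. I expect this to dissolve rather than to be a real obstacle: if $\sum_{i\in I} r_i = 0$ for a nonempty index set $I$, then pairing with any $r_m$, $m\in I$, gives $\sum_{i \in I}(r_m, r_i) = (r_m, r_m) = 1 \neq 0$, a contradiction, so orthonormal rows are automatically linearly independent and part (1) then certifies that $\CF$ is a frame. Alternatively, one may simply observe that $\Theta_{\CF}\Theta_{\CF}^* = I_d$ exhibits a right inverse for $\Theta_{\CF}$, forcing $\rank \Theta_{\CF} = d$ and hence the spanning property for free.
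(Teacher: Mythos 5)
Your proof is correct, and it is exactly the ``easy, matrix theoretical consequence of the definitions'' that the paper has in mind: the paper states this proposition without proof, and your argument---row rank equals column rank over $\Z_2$ for part (1), and the entrywise identification of $\Theta_{\CF}\Theta_{\CF}^*$ with the dot products of the rows for part (2)---is the intended one. Your closing observation is also sound: the degeneracy of the binary dot product causes no trouble here, since $\Theta_{\CF}\Theta_{\CF}^* = I_d$ already exhibits a right inverse of $\Theta_{\CF}$ and hence forces the spanning property.
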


In the remainder of this paper, unless otherwise noted, all vectors are elements of the binary vector spaces $\Z_2^d$ or $\Z_2^K$.  All operations are performed modulo 2; for example, $\Tr(A)$ represents the usual trace of a matrix $A$, but $\Tr(AB) \equiv \Tr(BA) \mod2$ for two binary matrices $A$ and $B$.  All {\em frames} refer to {\em binary frames.}  Throughout, we denote the standard orthonormal basis in $\Z_2^d$ by $\{\e_1, \e_2, \ldots, \e_d\}$.

\section{Dual and Parseval Binary Frames}

If $\CF = \{f_j\}_{j=1}^K$ is a frame for $\Z_2^d$ and $K=d$, then the vectors $\{f_j\}$ must be linearly independent and hence a basis with a unique (natural) dual $\CG$.  In this case, $(f_j, g_j) = \gamma_j(f_j) = 1$ for all $1\leq j \leq K$. In this section, we are largely concerned with the question of which sequences $\alpha \in \Z_2^K$ satisfy $\alpha[j] = (f_j, h_j)$ for a dual frame pair $(\CF, \CH)$; so we assume $K>d$.  We use $\| \alpha \|_0$ to denote the number of nonzero entries in a vector $\alpha$, the parity of which will be fundamental in this paper.

\begin{lemma} \label{perm}
Let $\CF = \{f_j\}_{j=1}^K$ be a frame for $\Z_2^d$ and let $\pi$ be a permutation of the set $\{1, 2, \ldots, K\}$.  Denote by $\CF_{\pi}$ the frame $\{f_{\pi(j)}\}_{j=1}^K$.  Then a frame $\CH$ is a dual frame of $\CF$ if and only if $\CH_{\pi}$ is a dual frame of $\CF_{\pi}$.  Furthermore, if $\alpha$ is a sequence in $\Z_2^K$, then the dual frame pair $(\CF,\CH)$ satisfies $(f_j, h_j) = \alpha[j]$ for every $j$ if and only if the dual frame pair $(\CF_{\pi}, \CH_{\pi})$ satisfies $(f_{\pi(j)},h_{\pi(j)}) = \alpha[\pi(j)]$ for every $j$.
\end{lemma}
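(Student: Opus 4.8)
The plan is to exploit the fact that applying the permutation $\pi$ to the indices of both frames simultaneously amounts to multiplying the synthesis operators on the right by a common permutation matrix, and that such a matrix leaves the reconstruction identity, and hence the dual frame relation, unchanged. Let $P$ denote the $K\times K$ permutation matrix associated with $\pi$, so that right multiplication by $P$ reorders the columns of a $d\times K$ matrix according to $\pi$; concretely, $\Theta_{\CF_{\pi}} = \Theta_{\CF}P$ and $\Theta_{\CH_{\pi}} = \Theta_{\CH}P$, so that $\Theta_{\CH_{\pi}}^* = P^*\Theta_{\CH}^*$. The one algebraic input I need is that $P$ is orthogonal, i.e. $PP^* = I_K$ over $\Z_2$, which holds because each row and column of $P$ has exactly one nonzero entry.

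First I would establish part (1). Since a permutation only reorders the columns of $\Theta_{\CF}$, the row rank is unchanged, so by Proposition \ref{framecols} the sequence $\CF_{\pi}$ is again a frame, and likewise for $\CH_{\pi}$. For the dual frame condition I would compute $\Theta_{\CF_{\pi}}\Theta_{\CH_{\pi}}^* = \Theta_{\CF}PP^*\Theta_{\CH}^* = \Theta_{\CF}\Theta_{\CH}^*$, using $PP^* = I_K$. Thus $\Theta_{\CF_{\pi}}\Theta_{\CH_{\pi}}^* = I_d$ if and only if $\Theta_{\CF}\Theta_{\CH}^* = I_d$, which is exactly the claim that $\CH_{\pi}$ is a dual of $\CF_{\pi}$ if and only if $\CH$ is a dual of $\CF$. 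Equivalently, one can argue directly from the reconstruction formula (\ref{reconstruction}): the sum $\sum_{j=1}^K (y, h_{\pi(j)})f_{\pi(j)}$ is merely a reordering of $\sum_{j=1}^K (y, h_j)f_j$, so the two expressions reconstruct every $y$ identically.

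For part (2), the content is purely the bijectivity of $\pi$ on the index set, with no analytic or matrix input. Assuming $(f_j, h_j) = \alpha[j]$ for every $j$, I would fix an arbitrary index $j$ and apply the hypothesis with $\pi(j)$ in place of $j$ to obtain $(f_{\pi(j)}, h_{\pi(j)}) = \alpha[\pi(j)]$, valid for all $j$. Conversely, given $(f_{\pi(j)}, h_{\pi(j)}) = \alpha[\pi(j)]$ for all $j$, for an arbitrary $k$ I would choose $j = \pi^{-1}(k)$ to recover $(f_k, h_k) = \alpha[k]$. Both directions follow at once because $\pi$ permutes $\{1, \ldots, K\}$.

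There is no serious obstacle here; the statement is a bookkeeping fact whose only subtlety is keeping the index substitutions straight and recognizing that the diagonal dot-product data attached to $(\CF_{\pi}, \CH_{\pi})$ is the permuted sequence $\alpha\circ\pi$, which has the same number of nonzero entries as $\alpha$. This last observation is precisely why the lemma is useful: it permits us to assume, without loss of generality, that the entries of $\alpha$ occur in any convenient order in later arguments, since permutation preserves $\|\alpha\|_0$ and therefore its parity.
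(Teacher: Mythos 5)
Your proof is correct and takes essentially the same route as the paper's: both introduce the permutation matrix $P$ and use $P^*P = PP^* = I_K$ to conclude $\Theta_{\CF_{\pi}}\Theta_{\CH_{\pi}}^* = \Theta_{\CF}\Theta_{\CH}^*$, so that the dual relation is preserved under simultaneous reindexing. The only cosmetic differences are that you extract both directions at once from this single matrix identity (the paper proves one direction and handles the converse by applying the argument to $\pi^{-1}$), and you write out the index substitution for the dot-product statement, which the paper leaves implicit.
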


\begin{proof}
Suppose $\Theta_{\CF}\Theta_{\CH}^* = I_d$ and $\diag(\Theta_{\CH}^*\Theta_{\CF}) = \alpha$.  Then 
\begin{eqnarray*}
\Theta_{\CF_{\pi}}\Theta_{\CH_{\pi}}^* &=& \left[ \begin{array}{c c c} | & & | \\ f_{\pi(1)} & \cdots & f_{\pi(K)}\\ | & & | \end{array} \right] 
\left[ \begin{array}{c}  \mbox{\textemdash \,}  h_{\pi(1)}^*  \mbox{\,\textemdash} \\ \vdots \\  \mbox{\textemdash \,}   h_{\pi(K)}^* \mbox{\,\textemdash}  \end{array} \right] \\
&=& \left[ \begin{array}{c c c} | & & | \\ f_{\pi(1)} & \cdots & f_{\pi(K)} \\ | & & | \end{array} \right]
P^* P  
\left[ \begin{array}{c}  \mbox{\textemdash \,}  h_{\pi(1)}^*  \mbox{\,\textemdash} \\ \vdots \\  \mbox{\textemdash \,}   h_{\pi(K)}^* \mbox{\,\textemdash}  \end{array} \right] \\
&=& \left[ \begin{array}{c c c} | & & | \\ f_{1} & \cdots & f_{K} \\ | & & | \end{array} \right] 
\left[ \begin{array}{c}  \mbox{\textemdash \,}  h_{1}^*  \mbox{\,\textemdash} \\ \vdots \\  \mbox{\textemdash \,}   h_{K}^* \mbox{\,\textemdash}  \end{array} \right] \\
&=& \Theta_{\CF}\Theta_{\CH}^*\\
&=& I_d
\end{eqnarray*}
where
\begin{eqnarray*}
P &=& \left[ \begin{array}{c}  \mbox{\textemdash \,}  \e_{\pi^{-1}(1)}^*  \mbox{\,\textemdash} \\ \vdots \\  \mbox{\textemdash \,}   \e_{\pi^{-1}(K)}^* \mbox{\,\textemdash}  \end{array} \right],
\end{eqnarray*}
and here we use $\e_i$ to indicate the $i^{\mbox{\tiny th}}$ standard basis vector in $\Z_2^K$.  Thus $\CF_{\pi}$ and $\CH_{\pi}$ are dual frames and $(f_j, h_j) = \alpha[j]$ for each $j$ implies $(f_{\pi(j)}, h_{\pi(j)}) = \alpha[\pi(j)]$.  For the converse statements, let $\sigma = \pi^{-1}$.
\end{proof}

The next theorem and corollary are the analog of Theorem \ref{dualframes} in binary space.

\begin{theorem} \label{getalpha}
Given $\alpha\in\Z_2^K$, there exists a dual frame pair $(\CF, \CH)$ for $\Z_2^d$ with $(f_i, h_i) = \alpha[i]$ for every $i$ if and only if $\|\alpha\|_0 \equiv d\mod2$.
\end{theorem}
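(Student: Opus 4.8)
The plan is to prove the two implications separately: necessity by a trace computation, and sufficiency by an explicit construction in which the standard basis is augmented by copies of the all-ones vector. For necessity, suppose $(\CF,\CH)$ is a dual frame pair with $(f_i,h_i)=\alpha[i]$ for every $i$. Since $\|\alpha\|_0\equiv\sum_{i=1}^K\alpha[i]\pmod 2$, and $(f_i,h_i)$ is exactly the $i$-th diagonal entry of the cross-Gramian $\Theta_{\CH}^*\Theta_{\CF}$, I would write $\|\alpha\|_0\equiv\Tr(\Theta_{\CH}^*\Theta_{\CF})\pmod 2$. The cyclic property of the trace modulo $2$ recorded in Section~2 then gives $\Tr(\Theta_{\CH}^*\Theta_{\CF})\equiv\Tr(\Theta_{\CF}\Theta_{\CH}^*)=\Tr(I_d)=d$, whence $\|\alpha\|_0\equiv d\pmod 2$. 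This is the easy half.

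For sufficiency, assume $\|\alpha\|_0\equiv d\pmod 2$. Let $w=\e_1+\cdots+\e_d$ be the all-ones vector and take $\CF=\{\e_1,\dots,\e_d,w,\dots,w\}$ with $K-d$ copies of $w$; this clearly spans $\Z_2^d$. The idea is that each copy of $w$ couples all coordinates simultaneously, which is exactly what is needed to steer the diagonal of the cross-Gramian. Define $R\in\Z_2^d$ by $R[i]=1+\alpha[i]$ and $S\in\Z_2^{K-d}$ by $S[l]=\alpha[d+l]$. The hypothesis $\|\alpha\|_0\equiv d$ translates precisely into $\sum_i R[i]\equiv\sum_l S[l]\pmod 2$, so I can choose a binary $d\times(K-d)$ matrix $P=[\,p^{(1)}\mid\cdots\mid p^{(K-d)}\,]$ whose $i$-th row has parity $R[i]$ and whose $l$-th column has parity $S[l]$. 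I then set $\CH=\{\e_1+R,\dots,\e_d+R,\;p^{(1)},\dots,p^{(K-d)}\}$.

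It remains to verify the two required properties, both of which are routine. Writing $\Theta_{\CF}\Theta_{\CH}^*=\sum_{i=1}^K f_ih_i^*$, the first $d$ outer products contribute $I_d+wR^*$, while the last $K-d$ contribute $w\bigl(\sum_l p^{(l)}\bigr)^*=wR^*$, since the $j$-th entry of $\sum_l p^{(l)}$ is the parity of the $j$-th row of $P$, namely $R[j]$; the two copies of $wR^*$ cancel, leaving $\Theta_{\CF}\Theta_{\CH}^*=I_d$. Thus $\CH$ is a dual frame of $\CF$, and since $\Theta_{\CH}$ has full row rank it is itself a frame. For the diagonal, $(\e_i,\e_i+R)=1+R[i]=\alpha[i]$ for $i\le d$, and $(w,p^{(l)})=\sum_j p^{(l)}[j]=S[l]=\alpha[d+l]$ for the remaining indices, as desired.

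The conceptual work lies entirely in setting up the construction rather than in the verification: one must discover that augmenting the basis by all-ones vectors makes the off-diagonal coupling controllable, and one must isolate the single elementary ingredient in which the parity hypothesis is actually used, namely the existence of a binary matrix with prescribed row-sum and column-sum parities. I expect this ingredient to be the main (though modest) obstacle, and I would establish it directly: fix all but one row and one column of $P$ to realize the prescribed parities, then observe that the one remaining corner entry is forced, with its two determining expressions agreeing exactly when $\sum_i R[i]\equiv\sum_l S[l]$ — precisely our hypothesis.
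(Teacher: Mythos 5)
Your proof is correct, and its necessity half coincides exactly with the paper's (the same trace computation using $\Tr(AB)\equiv\Tr(BA)\mod2$). The sufficiency half, however, takes a genuinely different route. The paper argues by cases on $\|\alpha\|_0$: for $\|\alpha\|_0 = d$ it takes $\CF \supseteq \{\e_1,\dots,\e_d\}$ with its natural dual; for $\|\alpha\|_0 = d+2t$ it pads $\CF$ with $2t$ copies of $\e_1$ and perturbs the natural dual by a matrix $C$ with $\Theta_{\CF}C = 0_d$ (Proposition \ref{allduals}); for $\|\alpha\|_0 = d-2t$ it adjoins the even vector $\e_{d-2t+1}+\cdots+\e_d$ and perturbs similarly. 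Each case only realizes the ``sorted'' diagonal (ones followed by zeros), so the paper must finish with the permutation Lemma \ref{perm}. Your construction is uniform in $\alpha$: $\CF = \{\e_1,\dots,\e_d,w,\dots,w\}$ with $w$ the all-ones vector, and $\CH$ assembled from a $d\times(K-d)$ binary matrix $P$ with row parities $R[i]=1+\alpha[i]$ and column parities $S[l]=\alpha[d+l]$. This dispenses with the case split, the natural-dual machinery, and the permutation step, and it isolates the parity hypothesis in one transparent place: the existence of such a $P$, which holds precisely when $\sum_i R[i]\equiv\sum_l S[l]\mod2$; your corner-entry argument for that combinatorial fact is sound, and the verification checks out ($\sum_{i\leq d}f_ih_i^* = I_d + wR^*$, $\sum_{i>d}f_ih_i^* = wR^*$, so $\Theta_{\CF}\Theta_{\CH}^* = I_d$, with the claimed diagonal dot products; full row rank of $\Theta_{\CH}$ is automatic from transposing the duality relation, as the paper notes after its Theorem 2.4). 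What the paper's longer route buys is reuse: the frames of its Cases 1--3 are recycled in the corollary following Theorem \ref{getalpha} and in Remark \ref{bigalpha}/Theorem \ref{pars_all}, where they are modified into Parseval frames; your $\CF$ has frame operator $I_d + (K-d)ww^*$, hence is Parseval only when $K-d$ is even, so it would not serve that later purpose as directly. Note finally that both arguments rely on the section's standing assumption $K>d$ (the paper in Case 3, you in requiring $P$ to have at least one column whenever some $R[i]\neq 0$); this is genuinely needed, since for $K=d$ the dual is unique and only $\alpha = \vec 1$ is attainable.
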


\begin{proof}
Suppose $(\CF, \CH)$ is a dual frame pair for $\Z_2^d$ such that $(f_i, h_i) = \alpha[i]$ for every $i$.  Then 
\[
\|\alpha\|_0 \equiv \mbox{Tr}(\Theta_{\CH}^* \Theta_{\CF}) \equiv \mbox{Tr}(\Theta_{\CF} \Theta_{\CH}^*) \equiv \mbox{Tr}(I_d) \equiv d \mod2. 
\]

Conversely, suppose $\|\alpha\|_0 \equiv d \mod2 $.  We consider three cases.

\underline{Case 1:}  $\|\alpha\|_0 = d$.  Let $f_j = \e_j$ for $1\leq j \leq d$ and let $f_j$ be arbitrary for $d < j \leq K$.  A natural dual frame is then given by $g_j = \e_j$ for $1 \leq j \leq d$ and $g_j = 0$ for $d < j \leq K$.  Define $\beta\in\Z_2^K$ by $\beta[j] = 1$ for $1\leq j \leq d$ and $\beta[j] = 0$ for $d< j \leq K$, and let $\pi$ be a permutation of $\{1, 2, \ldots, K\}$ such that $\beta[j] = \alpha[\pi(j)]$.
It follows that 
\[ \diag(\Theta_{\CG}^* \Theta_{\CF}) = \beta. \]
By Lemma \ref{perm}, $(\CF_{\pi^{-1}}, \CG_{\pi^{-1}})$ is the desired dual frame pair with $(f_{\pi^{-1}(j)}, g_{\pi^{-1}(j)}) = \beta[\pi^{-1}(j)] = \alpha[j]$ for each $j$.

\underline{Case 2:}  $\|\alpha\|_0 = d + 2t$ for some positive integer $t \leq \frac{K - d}{2}$.  Consider the frame $\CF$ defined in Case 1 above, but set $f_j = \e_1$ for all $d+1 \leq j \leq d+2t$.  A natural dual frame $\CG$ of $\CF$ is the same as that defined in Case 1 above.  Let $C$ be the $K\times d$ matrix whose top $d\times d$ block is $0_d$ and rows $d+1$ through $d+2t$ are equal to $\e_1^*$.  The remaining rows of $C$ are zeros.  Due to the introduction of an even number of $\e_1^*$'s, we see that $\Theta_{\CF}C = 0_d$, and hence $\CH$ given by the rows of $\Theta_{\CH}^* = \Theta_{\CG}^* + C$ is a dual frame of $\CF$, by Proposition \ref{allduals}.  Since $\diag(\Theta_{\CH}^* \Theta_{\CF})$ is the vector composed of $d + 2t$ ones followed by $K-(d+2t)$ zeros, Lemma \ref{perm} again implies the existence of the desired dual frame pair.

\underline{Case 3:}  $\|\alpha\|_0 = d - 2t$ for some positive integer $t \leq \frac{d}{2}$.  Consider again the frame $\CF$ defined in Case 1, except set $f_{d+1} = \e_d + \e_{d-1} + \cdots + \e_{d-2t+2} + \e_{d-2t+1}$.  We again take the same natural dual frame $\CG$ as in Case 1 above.  Let $C$ be the $K\times d$ matrix whose top $d-2t$ rows are zeros, row $d-2t+1$ through row $d+1$ are $f_{d+1}^*$, and the remaining rows are zeros.  Then $\Theta_{\CF}C = 0_d$, and hence $\CH$ defined by $\Theta_{\CH}^* = \Theta_{\CG}^* + C$ is a dual frame of $\CF$.  Due to the presence of an even number of ones in $f_{d+1}$, $(f_{d+1}, f_{d+1}) = 0$ while $(f_j, h_j) = 0$ for $d-2t+1\leq j \leq d$.  Consequently, $\diag(\Theta_{\CH}^* \Theta_{\CF})$ consist of ones in its first $d-2t$ entries followed by $K-(d-2t)$ zeros.  Lemma \ref{perm} again completes the proof.
\end{proof}

\begin{cor}
Given $\alpha\in\Z_2^K$, there exists a Parseval frame $\CF$ and a corresponding dual frame $\CH$ for $\Z_2^d$ with $(f_i, h_i) = \alpha[i]$ for every $i$ if and only if $\|\alpha\|_0 \equiv d\mod2$.
\end{cor}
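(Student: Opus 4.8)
The plan is to prove the two implications separately, leaning on Theorem \ref{getalpha} for necessity and on a Parseval-preserving refinement of its construction for sufficiency. The necessity is essentially free: if $\CF$ is a Parseval frame and $\CH$ is a dual frame of $\CF$, then $(\CF,\CH)$ is in particular a dual frame pair, so Theorem \ref{getalpha} applies verbatim and forces $\|\alpha\|_0 \equiv d \mod 2$. No use of the Parseval property is needed in this direction.

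For sufficiency I would revisit the three cases in the proof of Theorem \ref{getalpha}, the only new demand being that the synthesis operator satisfy $\Theta_{\CF}\Theta_{\CF}^* = I_d$. First I would record that Lemma \ref{perm} remains harmless here, since permuting the frame vectors only permutes the columns of $\Theta_{\CF}$ and hence leaves $\Theta_{\CF}\Theta_{\CF}^*$ unchanged; thus a permutation of a Parseval frame is Parseval. Next I would observe that Cases 1 and 2 already produce Parseval frames: in Case 1, $\Theta_{\CF} = [\,I_d \mid 0\,]$ gives $\Theta_{\CF}\Theta_{\CF}^* = I_d$ outright, and in Case 2 the $2t$ appended copies of $\e_1$ contribute $2t\,\e_1\e_1^* \equiv 0_d \mod 2$ to the frame operator, so again $\Theta_{\CF}\Theta_{\CF}^* = I_d$, with the dual adjustment by $C$ unchanged. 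Hence these two cases transfer directly, taking $\CH = \CG$ in Case 1 and $\CH$ the adjusted dual in Case 2.

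The real work, and the step I expect to be the main obstacle, is Case 3, where $\|\alpha\|_0 = d - 2t$: the frame of Theorem \ref{getalpha} uses a single even-weight vector $f_{d+1} = \e_1 + \cdots + \e_{2t}$ and is therefore not Parseval, since $\Theta_{\CF}\Theta_{\CF}^* = I_d + f_{d+1}f_{d+1}^* \neq I_d$. My fix is to include this spreading vector twice. Writing $w = \e_1 + \cdots + \e_{2t}$, I would take $\CF = \{\e_1, \ldots, \e_d, w, w, 0, \ldots, 0\}$, so that the two outer products $w w^*$ cancel modulo $2$ and $\Theta_{\CF}\Theta_{\CF}^* = I_d$ is restored. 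Starting from the self-dual $\CG = \CF$, whose cross-diagonal $\diag(\Theta_{\CG}^*\Theta_{\CF})$ carries ones exactly in positions $1, \ldots, d$, I would invoke Proposition \ref{allduals} to pass to $\CH$ with $\Theta_{\CH}^* = \Theta_{\CG}^* + C$, choosing $C$ so as to flip the first $2t$ of these ones to zero. Taking rows $1, \ldots, 2t$ of $C$ equal to $w^*$, row $d+1$ equal to $w^*$, and all other rows zero yields $\Theta_{\CF}C = w w^* + w w^* = 0_d$, so $\CH$ is a genuine dual; a direct computation then gives $(f_i, h_i) = (\e_i, \e_i) + (\e_i, w) = 1 + 1 = 0$ for $i \leq 2t$, $(f_i, h_i) = 1$ for $2t < i \leq d$, and $0$ thereafter, i.e. a cross-diagonal of weight $d - 2t$. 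A final permutation via Lemma \ref{perm} realizes the prescribed $\alpha$.

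The two points needing care are the verification that this doubled construction maintains $\Theta_{\CF}\Theta_{\CF}^* = I_d$ and $\Theta_{\CF}C = 0_d$ simultaneously, and the bookkeeping that it consumes two extra frame vectors; one should therefore check that there are enough indices available, which in Case 3 amounts to $K \geq d + 2$ rather than merely $K > d$ (the minimal example $d = 2$, $K = 3$, $\alpha = 0$ already shows the paired spreading vector cannot be squeezed into a single surplus slot). Granting this, Cases 1 through 3 exhaust every $\alpha$ with $\|\alpha\|_0 \equiv d \mod 2$ while keeping $\CF$ Parseval, which is exactly the content of the corollary.
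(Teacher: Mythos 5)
Your proof is correct and follows essentially the same route as the paper: necessity directly from Theorem \ref{getalpha}, zero-padding to make Cases 1 and 2 Parseval, and in Case 3 doubling the even-weight vector so that the two outer products $ww^*$ cancel mod $2$, then adjusting the dual via Proposition \ref{allduals} and permuting via Lemma \ref{perm}; the paper's $f'_{d+2}=f_{d+1}$ is exactly your repeated $w$, the only cosmetic difference being that you perturb the self-dual $\CG=\CF$ where the paper perturbs the natural dual $\{\e_1,\ldots,\e_d,\vec 0,\ldots,\vec 0\}$. Your closing caveat that Case 3 requires $K\geq d+2$ is a genuine point that the paper overlooks (its own construction also uses the index $d+2$, while the section only assumes $K>d$), and your example is sharper than you state: for $d=2$, $K=3$, $\alpha=\vec 0$ the statement itself fails, not merely the construction, since every three-element Parseval frame for $\Z_2^2$ is a permutation of $\{\e_1,\e_2,\vec 0\}$ and every dual of such a frame has cross-Gramian diagonal of weight exactly $2$.
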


\begin{proof}
The necessity of the condition on $\|\alpha\|_0$ follows immediately from Theorem \ref{getalpha}.  The sufficiency depends on slight modifications of the frame $\CF$ constructed in the proof of Theorem \ref{getalpha}.  In Case 1, instead of letting $f_j$ for $d<j\leq K$ be arbitrary, set each of those vectors to be the zero vector, $\vec 0$, in $\Z_2^d$.  Proposition \ref{framecols} implies $\CF$ is a Parseval frame.  Similarly, the frame built in Case 2 is a Parseval frame if we set $f_j = \vec 0$ for $2t+1 \leq j \leq K$.  The frame built in Case 3 is not a Parseval frame; however, consider instead the frame $\CF'$ defined as $f'_j = f_j$ for $1\leq j \leq d+1$, $f'_{d+2} = f_{d+1}$, and $f'_j = \vec 0$ for $d+3\leq j \leq K$.  By Proposition \ref{framecols}, $\CF'$ is a Parseval frame.  Note that each column of the matrix $C$ constructed in Case 3 is still a (possibly trivial) dependence relation among the columns of $\Theta_{\CF'}$, which implies $\Theta_{\CF'}C = 0_d$.  Since the natural dual $\CG$ of $\CF$ constructed in Case 3 is still a natural dual of $\CF'$, the frame $\CH$ with analysis operator $\Theta_{\CH}^* = \Theta_{\CG}^* + C$ is a dual frame of $\CF'$; moreover, $(f'_j, h_j) = (f_j,h_j)$ for all $j$ since $h_{d+2} = 0$.
\end{proof}

\begin{remark} \label{bigalpha}
The Parseval frames built in Cases 1 and 2 of the above corollary, in fact, satisfy $(f_j, f_j) = \alpha[j]$ for each $j$ after a suitable permutation, as allowed by Lemma \ref{perm}.  However, this is not true in Case 3.  By constructing a Parseval frame that satisfies $(f_j, f_j) = \alpha[j]$ for each $j$ in the case when $\|\alpha\|_0 = d - 2t$ for some positive integer $t\leq \frac{d}{2}$, we will prove the binary analog of Theorem \ref{fundineq}.
\end{remark}

\begin{theorem} \label{pars_all}
Given nonzero $\alpha\in\Z_2^K$, there exists a Parseval frame $\CF$ for $\Z_2^d$ with $(f_i, f_i) = \alpha[i]$ for every $i$ if and only if $\|\alpha\|_0 \equiv d\mod2$.
\end{theorem}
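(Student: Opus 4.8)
The plan is to handle necessity by a trace identity and sufficiency by an explicit construction that reduces everything to one new building block. For necessity, suppose $\CF=\{f_j\}_{j=1}^K$ is a Parseval frame with $(f_i,f_i)=\alpha[i]$. Then $\Theta_{\CF}\Theta_{\CF}^*=I_d$, and the numbers $(f_i,f_i)$ are precisely the diagonal entries of the Gramian $\Theta_{\CF}^*\Theta_{\CF}$, so
\[\|\alpha\|_0\equiv\sum_{i=1}^K(f_i,f_i)\equiv\Tr(\Theta_{\CF}^*\Theta_{\CF})\equiv\Tr(\Theta_{\CF}\Theta_{\CF}^*)\equiv\Tr(I_d)\equiv d\mod2,\]
exactly as in the forward direction of Theorem~\ref{getalpha}.

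For sufficiency I would first shrink the range to the one genuinely hard case. Permuting the columns of a synthesis operator preserves the identity $\Theta_{\CF}\Theta_{\CF}^*=I_d$ and only permutes the diagonal of the Gramian, so it is enough to exhibit, for each admissible value of $\|\alpha\|_0$, a single Parseval frame whose multiset of ``norms'' $(f_j,f_j)$ contains exactly $\|\alpha\|_0$ ones; a suitable permutation then moves those ones onto the support of $\alpha$. When $\|\alpha\|_0=d$ or $\|\alpha\|_0=d+2t$ the required frames are already recorded in Remark~\ref{bigalpha}. Thus the theorem reduces to the case $\|\alpha\|_0=d-2t$ with $t\ge1$ (and $\|\alpha\|_0\ge1$ since $\alpha\ne0$), precisely the situation Remark~\ref{bigalpha} leaves open.

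The heart of the matter is to construct, for $1\le m=d-2t$, a Parseval frame for $\Z_2^d$ with exactly $m$ odd-weight columns (note that $(f_j,f_j)$ is the weight of $f_j$ modulo $2$). The main obstacle, and the reason the corollary's construction does not transfer, is that the even-weight vectors form a codimension-one subspace, so no coordinate block can be spanned by even columns alone; isolating coordinates and filling the rest with even vectors is therefore impossible. My key step is a single building block: for every odd $n$, I claim the $n\times(n+1)$ matrix $\Theta_n=[\,\mathbf{1}\mid J_n+I_n\,]$, whose first column is the all-ones vector $\mathbf{1}$ and whose remaining columns are those of $J_n+I_n$ (the all-ones matrix plus the identity), is a Parseval frame for $\Z_2^n$ with exactly one odd-weight column. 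The verification is the crucial $\Z_2$-computation: $\mathbf{1}\mathbf{1}^*=J_n$, and since $J_n^2=nJ_n=J_n$ for odd $n$ one gets $(J_n+I_n)^2=J_n^2+I_n=J_n+I_n$, so
\[\Theta_n\Theta_n^*=J_n+(J_n+I_n)=I_n;\]
moreover the first column has odd weight $n$ while each other column has even weight $n-1$.

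Finally I would assemble the frame block-diagonally. Take $m-1$ blocks equal to the $1\times1$ matrix $[1]$ (contributing the odd columns $\e_1,\dots,\e_{m-1}$) together with one copy of $\Theta_{2t+1}$ acting on the last $2t+1=d-m+1$ coordinates. Block-diagonality gives $\Theta_{\CF}\Theta_{\CF}^*=I_d$, and a column is odd exactly when its single nonzero sub-block is odd, so the total number of odd columns is $(m-1)+1=m$, as required. This frame has $d+1$ columns; since $K>d$ is assumed throughout this section, I pad with $K-(d+1)\ge0$ zero columns (even weight, not affecting $\Theta_{\CF}\Theta_{\CF}^*$) and then permute so that the $m$ odd columns occupy the support of $\alpha$. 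The only points needing care in a full write-up are the mod-$2$ arithmetic in the block identity and the count $K\ge d+1$ that makes the padding legitimate.
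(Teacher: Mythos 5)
Your proposal is correct, and both directions start the same way as the paper's proof: necessity is the trace identity inherited from Theorem \ref{getalpha} (self-duality of Parseval frames), and sufficiency is reduced, via column permutations and Remark \ref{bigalpha}, to the single open case $\|\alpha\|_0 = d-2t \geq 1$. Where you genuinely diverge is in how that case is settled. The paper runs an induction on the dimension: an explicit base frame for $d=3$, a step passing from $\Z_2^{d-2}$ to $\Z_2^d$ by appending two standard basis vectors, the special frame $\widetilde \CF^1 = \{\vec 1 + \e_1, \ldots, \vec 1 + \e_d, \vec 1\}$ for $\|\alpha\|_0 = 1$ in each odd dimension, and a final augmentation handling even $d$. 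You instead write down one closed-form synthesis matrix, the block-diagonal
\[
\Theta_{\CF} \;=\; \left[ \begin{array}{c c} I_{m-1} & 0 \\ 0 & \left[\, \vec 1 \ \big| \ J_n + I_n \,\right] \end{array} \right], \qquad n = 2t+1,\ m = d-2t,
\]
where $J_n$ is the $n\times n$ all-ones matrix, and verify Parsevalness in one line from $\vec 1\,\vec 1^{\,*} = J_n$ and $(J_n+I_n)^2 = J_n^2 + I_n = nJ_n + I_n = J_n + I_n$ for odd $n$; the odd-column count $(m-1)+1 = m$ and the padding by zero vectors (legitimate since $K > d$ is in force in this section) finish the argument. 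It is worth noting that your building block $[\,\vec 1 \mid J_n + I_n\,]$ is precisely the paper's $\widetilde\CF^1$ in dimension $n$, and unwinding the paper's induction yields, up to permutation, exactly your block-diagonal frame -- so the two proofs ultimately exhibit the same frames. What your packaging buys is the elimination of the induction and of the separate treatment of odd and even $d$: a single algebraic identity replaces the inductive bookkeeping, and the count of odd columns is transparent. What the paper's version buys is a recursively generated family of intermediate frames $\CF^1, \CF^3, \ldots$ in every dimension (one of which reappears in its Section 5 examples), at the cost of a longer case analysis.
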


\begin{proof}
Since a Parseval frame is self-dual, the necessity of the condition on $\|\alpha\|_0$ follows immediately from Theorem \ref{getalpha}.  For sufficiency, Remark \ref{bigalpha} implies that we need only construct Parseval frames satisfying $(f_i, f_i) = \alpha[i]$ for every $i$ for $\|\alpha\|_0 < d$.  Since $\|\alpha\|_0 \equiv d\mod2$ and $\|\alpha\|_0 \neq 0$, we must have $d\geq 3$.
  
For $d=3$ and $\|\alpha\|_0 = 1$, we build the Parseval frame 
\[ \left\{ \left[ \begin{array}{c} 0\\ 1\\ 1 \end{array} \right], \left[ \begin{array}{c} 1\\ 0\\ 1 \end{array} \right], \left[ \begin{array}{c} 1\\ 1\\ 0 \end{array} \right], \left[ \begin{array}{c} 1\\ 1\\ 1 \end{array} \right] \right\}, \] 
and note that we may permute the vectors as needed.  Moreover, we may insert any number of copies of $\vec 0$ to satisfy any $K>4$.  By augmenting each vector with a last entry of 0 and inserting the vector $\e_4\in\Z_2^4$, we construct the Parseval frame
\[ \left\{ \left[ \begin{array}{c} 0\\ 1\\ 1\\ 0 \end{array} \right], \left[ \begin{array}{c} 1\\ 0\\ 1 \\ 0\end{array} \right], \left[ \begin{array}{c} 1\\ 1\\ 0 \\ 0 \end{array} \right], \left[ \begin{array}{c} 1\\ 1\\ 1 \\ 0 \end{array} \right], \left[ \begin{array}{c} 0\\ 0\\ 0 \\ 1  \end{array} \right] \right\} \] 
for $\Z_2^4$ that, after suitable permutation and inclusion of copies of $\vec 0$, satisfies any $\|\alpha\|_0 = 2$. 

Given any odd dimension $d$, suppose we have constructed, without zero vectors, the Parseval frames $\CF^1, \CF^3, \ldots, \CF^{d-4}$ for $\Z_2^{d-2}$ corresponding to $\|\alpha\|_0 = 1, 3, \ldots, d-4$.  For each odd $n$, create the collection $\widetilde \CF^{n+2}$ by augmenting each vector of $\CF^n$ with two zero entries and unioning the augmented vectors with $\e_{d-1}, \e_{d} \in \Z_2^d$.  Then $\widetilde \CF^3, \widetilde \CF^5, \ldots, \widetilde \CF^{d-2}$ are Parseval frames for $\Z_2^d$ corresponding to $\|\alpha\|_0 = 3, 5, \ldots, d-2$.  Let $\widetilde \CF^1 = \{ \vec 1 + \e_1, \vec 1 + \e_2, \ldots, \vec 1 + \e_d, \vec 1\}$ where $\vec 1$ represents the vector with $d$ ones in $\Z_2^d$.  Then $\widetilde \CF_1$ is a Parseval frame for $\Z_2^d$ corresponding to $\|\alpha\|_0 = 1$.

If the dimension $d$ is even, we create the Parseval frames $\widetilde F^{n+1}$ from $\widetilde F^n$ for each $n = 1, 3, \ldots, d-3$, corresponding to $\|\alpha\|_0 = 2, 4, \ldots, d-2$ :  augment each vector of $\widetilde F^n$ with a last entry of 0 and insert the vector $\e_d\in\Z_2^d$.  

In both cases, permutation of the vectors and possible inclusion of copies of $\vec 0$ finishes the proof.
\end{proof}

\section{Binary Frames with Prescribed Frame Operator}

In the previous section, we gave a necessary and sufficient condition on $\alpha \in \Z_2^K$ for the existence of a Parseval frame $\CF$ for $\Z_2^d$ with $(f_j, f_j) = \alpha[j]$ for every $j$.  In classical frame theory over $\R$ or $\C$, the characterization has been broadened to frames with a given frame operator and specified values for $\|f_j\|$ (the case of a Parseval frame is when $S=I$), as in Theorem \ref{givenfo}.  In the classical case, the frame operator is a symmetric, invertible, positive definite matrix.  For a binary frame $\CF$, $S_{\CF} =\Theta_{\CF}\Theta_{\CF}^*$ is not necessarily invertible; for example, the zero matrix is the frame operator of any frame in which every vector occurs twice.  Consequently, we must first characterize those binary symmetric matrices that are frame operators of binary frames.

Throughout this section, we rely heavily on the idea of vector parity in $\Z_2^d$.

\begin{definition}
Describe a vector $v \in \Z_2^d$ as {\em even} if $(v, v) = 0$.  Equivalently, a vector is even if $\|v\|_0 \equiv 0 \mod2$. If a vector is not even, then it is {\em odd}.
\end{definition}

\begin{lemma} \label{vecsum}
\begin{enumerate}
  \item The sum of two even vectors is an even vector.
  \item The sum of two odd vectors is an even vector.
  \item The sum of an odd vector and an even vector is an odd vector.
\end{enumerate}
\end{lemma}

\begin{proof}
This follows from the above definition and the observation that if $u,v \in \Z_2^d$, then
	\[
	(u+v, u+v) = (u,u) + (u,v) + (v,u) + (v,v) = (u,u) + (v,v).
	\]
\end{proof}

As a consequence of this lemma, we note that a collection of only even vectors cannot span $\Z_2^d$.

Given a $d\times d$ symmetric matrix $S$, we call $A$ a {\em factor} of $S$ if $S = AA^*$.  We say $A$ is a {\em minimal factor} if it has the minimum number of columns over all factors of $S$.  Minimal factorization of symmetric binary matrices also arises in the computation of the covering radius of Reed-Muller codes (\cite{CHLL97}).

\begin{theorem}[\cite{L75}, Theorem 1] \label{Lempel}
Every binary symmetric matrix $S$ can be factorized as $S = AA^*$ for some binary matrix $A$.  The number of columns of a minimal factor of $S$ is $\rank(S)$ if $\diag (S) \neq \vec 0$ and $\rank(S) + 1$ if $\diag(S) = \vec 0$.
\end{theorem}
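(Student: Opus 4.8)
The plan is to read a factorization $S = AA^*$, with $A$ of size $d \times K$, as the assertion that the rows $a_1, \dots, a_d$ of $A$ realize $S$ as a Gram matrix for the standard dot product on $\Z_2^K$, i.e. $(a_i, a_j) = S_{ij}$. Two observations organize the whole argument. First, because $x = x^2$ in $\Z_2$, the diagonal entry $(AA^*)_{ii} = \sum_k A_{ik}^2 = \sum_k A_{ik}$ is exactly the parity of the $i$-th row of $A$; hence $\diag(S) = \vec 0$ precisely when every row of a factor is even. Second, if $Q$ is invertible and $S' = QSQ^*$, then $S = Q^{-1} S' (Q^{-1})^*$, so a $K$-column factor of one of $S, S'$ yields a $K$-column factor of the other; thus the minimal number of columns depends only on the congruence class of $S$. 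I would first record the two lower bounds. Since $\rank(S) = \rank(AA^*) \le \rank(A) \le K$, any factor has $K \ge \rank(S)$ columns; and if $\diag(S) = \vec 0$ then every row of $A$ lies in the even-weight subspace of $\Z_2^K$, which has dimension $K-1$, so $\rank(A) \le K-1$ and therefore $K \ge \rank(S) + 1$.

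By congruence-invariance I may replace $S$ by any congruent matrix, which reduces existence and the upper bounds to putting $S$ into a normal form. Splitting off the radical $\{v : Sv = \vec 0\}$ writes $S \cong \bar S \oplus 0$ with $\bar S$ nondegenerate of rank $\rank(S)$, and the classification of nondegenerate symmetric forms over $\Z_2$ gives two cases: if $\diag(S) \neq \vec 0$ (equivalently $\bar S$ is non-alternating) then $\bar S \cong I_r$, so $S \cong I_r \oplus 0$ with $r = \rank(S)$; if $\diag(S) = \vec 0$ (so $\bar S$ is alternating) then $\bar S$ is a block-diagonal sum of $m$ hyperbolic planes $P = \left[\begin{smallmatrix} 0 & 1 \\ 1 & 0 \end{smallmatrix}\right]$ with $\rank(S) = 2m$. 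Each normal form is then factored explicitly: $I_r \oplus 0$ is factored by $\left[\begin{smallmatrix} I_r \\ 0 \end{smallmatrix}\right]$ using $r$ columns, matching the first lower bound; and a sum of $m$ hyperbolic planes (followed by zeros) is factored by the $2m$ vectors $a_i = \e_i + \e_{i+1} \in \Z_2^{2m+1}$, whose Gram matrix is the nondegenerate, zero-diagonal tridiagonal matrix of a path and hence is congruent to $\bar S$, using $2m+1 = \rank(S)+1$ columns and matching the second lower bound. Transporting these factors back through the congruence completes both existence and minimality.

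The step I expect to be the main obstacle is the classification used above, specifically the claim that a nondegenerate non-alternating form is congruent to $I_r$. The naive induction --- pick $v$ with $(v,v)=1$, split $V = \la v\ra \oplus v^\perp$, and induct on $v^\perp$ --- breaks down because the restriction to $v^\perp$ can be alternating (for instance $v = \e_1$ inside $I_1 \oplus P$ gives $v^\perp$ carrying the form $P$). The fix, which I would isolate as a lemma, is the identity $I_1 \oplus P \cong I_3$: one checks directly that $(1,1,0),(1,0,1),(1,1,1)$ each have square $1$ and are pairwise orthogonal for the form $I_1 \oplus P$, hence form an orthonormal basis. Granting this, the induction goes through: after splitting off $\la v\ra$, if $v^\perp$ is non-alternating we induct, and if $v^\perp$ is alternating, say congruent to $m'$ copies of $P$, we repeatedly apply $I_1 \oplus P \cong I_3$ to trade each hyperbolic plane for two copies of $I_1$, arriving at $I_{1+2m'} = I_n$. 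The remaining pieces --- existence of an orthogonal splitting off a vector of nonzero square, the symplectic-basis structure of nondegenerate alternating forms, and the rank computation for the path Gram matrix --- are routine, as is the degenerate convention under which $S = 0_d$ is assigned minimal column count $1$.
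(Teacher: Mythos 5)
First, a caveat about the comparison: the paper does not prove this statement at all. It is quoted verbatim from Lempel ([L75], Theorem 1) and used as a black box; the only further nod to its proof is the remark in Section 5 that the algorithm for factoring $S=AA^*$ and reducing $A$ to a minimal factor can be found in [L75]. So your proof can only be measured against Lempel's original argument, which is constructive and matrix-algorithmic (produce some factorization, then shrink the number of columns by explicit operations on the factor), not against anything internal to the paper. Your route is genuinely different and, as far as I can check, correct and complete modulo steps that really are routine. The two lower bounds are right: $K\geq\rank(S)$ always, and when $\diag(S)=\vec 0$ every row of a factor $A$ is even, so the row space of $A$ lies in the $(K-1)$-dimensional even-weight subspace of $\Z_2^K$, giving $K\geq\rank(A)+1\geq\rank(S)+1$. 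Congruence invariance of the minimal column count is immediate from $QSQ^*=(QA)(QA)^*$, and the reduction to the normal forms $I_r\oplus 0$ and $P^{\oplus m}\oplus 0$, with $P=\left[\begin{smallmatrix}0&1\\1&0\end{smallmatrix}\right]$, is the standard classification of symmetric bilinear forms over $\Z_2$. You correctly identified, and correctly repaired, the one genuinely delicate point in that classification: the naive induction fails when $v^\perp$ carries an alternating form, and your lemma $I_1\oplus P\cong I_3$ fixes it (your three vectors $(1,1,0),(1,0,1),(1,1,1)$ are indeed pairwise orthogonal, of square $1$, and linearly independent for that form). The other load-bearing step, factoring $P^{\oplus m}$ with $2m+1$ columns via $a_i=\e_i+\e_{i+1}\in\Z_2^{2m+1}$, also works: the Gram matrix is the path matrix $T_{2m}$, whose determinant over $\Z_2$ equals $1$ in even order (recurrence $\det T_n=\det T_{n-2}$ with $\det T_2=1$), so it is nondegenerate, alternating, and hence congruent to $P^{\oplus m}$; transporting through the congruences gives a factor with exactly $\rank(S)+1$ columns, matching your second lower bound. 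As for what each approach buys: Lempel's argument yields an algorithm, which this paper actually relies on for its Section 5 examples and data; yours is a self-contained structural proof in which the $\rank(S)$ versus $\rank(S)+1$ dichotomy is explained conceptually (non-alternating versus alternating forms) rather than emerging from a reduction procedure.
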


\begin{prop} \label{equalrank}
If $S = AA^*$ for some $d\times m$ matrix $A$ where $m = \rank(S)$ or $m = \rank(S)+1$, then $\rank(A) = \rank(S)$.
\end{prop}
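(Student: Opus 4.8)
The plan is to bracket $\rank(A)$ between $\rank(S)$ and $m$, and then use the two hypothesized values of $m$ to pin it down. Since $\rank(BC)\le\min(\rank(B),\rank(C))$ over any field, we have $\rank(S)=\rank(AA^*)\le\rank(A)$, and trivially $\rank(A)\le m$ because $A$ has $m$ columns. So in every case $\rank(S)\le\rank(A)\le m$.

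When $m=\rank(S)$, this squeeze is already conclusive: $\rank(S)\le\rank(A)\le m=\rank(S)$ forces $\rank(A)=\rank(S)$, and there is nothing more to do.

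The work is all in the case $m=\rank(S)+1$, where the squeeze only yields $\rank(A)\in\{\rank(S),\,\rank(S)+1\}$, so I must rule out $\rank(A)=m$. I would argue by contradiction: suppose $A$ has full column rank, so that $\ker(A)=\{\vec 0\}$ as a map $\Z_2^m\to\Z_2^d$. The key observation is that triviality of $\ker(A)$ alone forces $\ker(S)=\ker(A^*)$. Indeed, $A^*x=\vec 0$ always gives $Sx=A(A^*x)=\vec 0$, and conversely $Sx=\vec 0$ means $A^*x\in\ker(A)=\{\vec 0\}$, whence $A^*x=\vec 0$. Therefore $\rank(S)=d-\dim\ker(S)=d-\dim\ker(A^*)=\rank(A^*)=\rank(A)=m$, contradicting $m=\rank(S)+1$. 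Hence $\rank(A)\le m-1=\rank(S)$, and combined with $\rank(A)\ge\rank(S)$ this gives equality.

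The only genuinely delicate point—which I expect to be the main obstacle conceptually, though not computationally—is recognizing that over $\Z_2$ one cannot simply quote the real or complex identity $\rank(AA^*)=\rank(A)$: a self-orthogonal vector lying in the row space of $A$ may also lie in $\ker(A)$, so in general $S=AA^*$ can drop rank relative to $A$ (this is precisely what the ``$+1$'' in Theorem \ref{Lempel} accounts for). The proof sidesteps this by invoking the one regime in which the identity does survive, namely $\ker(A)=\{\vec 0\}$, where the equality $\ker(S)=\ker(A^*)$ is purely linear-algebraic and requires no ordering, positivity, or invertibility of the form.
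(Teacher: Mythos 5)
Your proof is correct, but it takes a genuinely different route from the paper's, most notably in the case $m = \rank(S)+1$. The paper handles both cases uniformly via Frobenius's rank inequality: it obtains $\rank(A) + \rank(A^*) \leq m + \rank(S) \leq m + \rank(A)$, and in the case $m = \rank(S)+1$ it finishes with a parity observation---the even number $2\,\rank(A)$ cannot equal the odd number $2\,\rank(S)+1$, so $\rank(A) \leq \rank(S)$. You replace this with a kernel argument: assuming $A$ has full column rank, $\ker(A) = \{\vec 0\}$ forces $\ker(S) = \ker(A^*)$, whence rank-nullity gives $\rank(S) = \rank(A^*) = \rank(A) = m$, contradicting $m = \rank(S)+1$. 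Both arguments are complete. The trade-off is that the paper's version is shorter and symmetric between the two cases, but leans on Frobenius's inequality (cited from Horn and Johnson), whereas yours is entirely self-contained---nothing beyond rank-nullity and $\rank(A^*) = \rank(A)$---and isolates a reusable general fact: over any field, if $A$ has trivial kernel then $\rank(AA^*) = \rank(A)$. Your closing remark also correctly diagnoses why the classical identity $\rank(AA^*) = \rank(A)$ cannot simply be quoted over $\Z_2$: the rank can drop exactly when the row space of $A$ meets $\ker(A)$ in a nonzero (necessarily self-orthogonal) vector, which is the same failure mode that the paper's parity trick and your injectivity hypothesis each circumvent, and which Theorem \ref{Lempel}'s ``$+1$'' case encodes.
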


\begin{proof}
Frobenius's rank inequality and the fact that, for properly sized matrices $C$ and $D$, $\rank(CD) \leq \min\{\rank(C), \rank(D)\}$ (see, for example, \cite{HJ85}) imply that
\[ \rank(A) + \rank(A^*) \leq m + \rank(S) \leq m + \rank(A). \] 
If $m = \rank(S)$, the above inequalities simplify to 
\[ \rank(A) \leq \rank(S) \leq \rank(A),\]
and hence $\rank(A) = \rank(S)$.  If $m = \rank(S) + 1$, we instead have
\[ 2\,\rank(A) \leq 2\,\rank(S) + 1 \leq 2\,\rank(A) + 1.\]
Since $2\,\rank(A) = 2\,\rank(S) + 1$ is impossible, we must have $\rank(A) = \rank(S)$. 
\end{proof}

We can use factors of a matrix to construct frames with a given frame operator.  Minimal factors correspond to {\em minimal frames}, that is, frames with the fewest number of elements.  In the previous section, we disregarded frames $\{f_j\}_{j=1}^K$ that were bases since they corresponded to unique duals $\{g_j\}_{j=1}^K$ with predetermined values for the dot products $(f_j,g_j)$.  In this section, however, we are concerned with $(f_j,f_j)$, so we do not rule the case $K=d$ out of consideration.

\begin{theorem} \label{LempelFullrank}
 Let $S$ be a $d\times d$ symmetric matrix with $\rank(S) = d$.  There exists a (minimal) $d$-element frame $\CF$ (i.e., basis) such that $S = \Theta_{\CF}\Theta_{\CF}^*$ if and only if $\diag(S) \neq \vec 0$.  If $\diag(S) = \vec 0$, then there exists a (minimal) $(d+1)$-element frame $\CF$ such that $S = \Theta_{\CF}\Theta_{\CF}^*$. 
\end{theorem}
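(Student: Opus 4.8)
The plan is to reduce the entire statement to the combination of Lempel's factorization theorem (Theorem \ref{Lempel}), the rank equality of Proposition \ref{equalrank}, and the characterization of frames via their synthesis operators in Proposition \ref{framecols}. The crucial translation is this: a sequence $\CF$ of $K$ vectors in $\Z_2^d$ is a frame with frame operator $S$ precisely when its synthesis matrix $\Theta_{\CF}$ (the $d\times K$ matrix whose columns are the frame vectors) satisfies $S = \Theta_{\CF}\Theta_{\CF}^*$ and has columns spanning $\Z_2^d$. By Proposition \ref{framecols}(1), the spanning condition is equivalent to $\Theta_{\CF}$ having full row rank $d$. Thus the problem becomes: produce a factor $A$ of $S$ with exactly $d$ (resp.\ $d+1$) columns and full row rank $d$, and interpret its columns as the frame.

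First I would handle the forward construction when $\diag(S) \neq \vec 0$. Since $\rank(S) = d$, Theorem \ref{Lempel} provides a minimal factor $A$ with $\rank(S) = d$ columns, i.e.\ a square $d\times d$ matrix with $AA^* = S$. Proposition \ref{equalrank}, applied with $m = \rank(S)$, forces $\rank(A) = d$, so $A$ is invertible and its columns form a basis of $\Z_2^d$. Taking $\CF$ to be these columns yields a $d$-element frame with $\Theta_{\CF} = A$ and $\Theta_{\CF}\Theta_{\CF}^* = S$; this frame is minimal because any frame for $\Z_2^d$ must contain at least $d$ vectors.

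For the converse, suppose some $d$-element basis $\CF$ satisfies $S = \Theta_{\CF}\Theta_{\CF}^*$. Then $\Theta_{\CF}$ is a factor of $S$ with only $d = \rank(S)$ columns. If $\diag(S)$ were $\vec 0$, Theorem \ref{Lempel} would assert that every factor of $S$ has at least $\rank(S) + 1 = d+1$ columns, contradicting the existence of the $d$-column factor $\Theta_{\CF}$. Hence $\diag(S) \neq \vec 0$, which completes the equivalence. Finally, in the case $\diag(S) = \vec 0$, Theorem \ref{Lempel} supplies a minimal factor $A$ with $\rank(S)+1 = d+1$ columns, and Proposition \ref{equalrank} (now with $m = \rank(S)+1$) again gives $\rank(A) = d$, so $A$ has full row rank and its columns form a $(d+1)$-element frame $\CF$ with $\Theta_{\CF}\Theta_{\CF}^* = S$.

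I do not anticipate a genuine obstacle, since the result is essentially a corollary of the machinery already in place. The only points requiring care are the bookkeeping that identifies a \emph{minimal frame} with a \emph{minimal factor} and matches the two column counts ($d$ versus $d+1$) to the two diagonal cases, together with the repeated appeal to Proposition \ref{equalrank} to guarantee that the factor produced by Lempel's theorem genuinely has the full row rank needed for its columns to span $\Z_2^d$.
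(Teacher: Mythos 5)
Your proof is correct, and its two construction halves are exactly the paper's: in both cases you invoke Theorem \ref{Lempel} to obtain a factor $A$ of $S$ with $\rank(S)$ (resp.\ $\rank(S)+1$) columns, apply Proposition \ref{equalrank} to conclude $\rank(A)=d$, and take the columns of $A$ as the frame. Where you genuinely diverge is the necessity direction. To rule out a $d$-element frame when $\diag(S)=\vec 0$, you use the minimality clause of Theorem \ref{Lempel}: the minimal factor of such an $S$ has $\rank(S)+1=d+1$ columns, so no $d$-column factor $\Theta_{\CF}$ can exist. The paper instead gives a self-contained parity argument: if $\diag(S)=\vec 0$, then every row of the square matrix $\Theta_{\CF}$ is even (the $i$th diagonal entry of $S$ is the dot product of the $i$th row of $\Theta_{\CF}$ with itself), and by the consequence of Lemma \ref{vecsum} a collection of only even vectors cannot span $\Z_2^d$; hence $\Theta_{\CF}$ has rank less than $d$ and its columns cannot span, contradicting $\CF$ being a frame. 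Your route is shorter but treats the lower bound on factor sizes as a black box imported from \cite{L75}; the paper's route needs only the existence-plus-column-count half of Lempel's theorem for this direction and makes the obstruction explicit --- it is precisely the parity phenomenon (even vectors cannot span) that the rest of Section 4 is built on, e.g.\ in Lemma \ref{parind} and Theorem \ref{parind-min}. Both arguments are valid.
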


\begin{proof}
Suppose there exists a $d$-element frame $\CF$ such that $S = \Theta_{\CF}\Theta_{\CF}^*$.  If $\diag(S) = \vec 0$, then $\Theta_{\CF}$ is a $d\times d$ square matrix with all even rows, which cannot span $\Z_2^d$.  Hence, the columns of $\Theta_{\CF}$ cannot span $\Z_2^d$, contradicting $\CF$ being a frame.  Conversely, suppose $\diag(S) \neq \vec 0$.  By Theorem \ref{Lempel}, there exists a $d\times k$ matrix $A$ such that $S = AA^*$ and $k = \rank(S)$.  By Proposition \ref{equalrank}, $\rank(A) = \rank(S) = k = d$.  Thus, $A$ is a $d\times d$ matrix whose columns span $\Z_2^d$.  Defining $\CF$ by $\Theta_{\CF} = A$ constructs the $d$-element frame.

Now suppose $\diag(S) = \vec 0$.  By Theorem \ref{Lempel}, there exists a minimal $d\times k$ factor $A$ such that $S = AA^*$ and $k = \rank(S) + 1$.  By Proposition \ref{equalrank}, $\rank(A) = \rank(S) = d$.  Therefore, the columns of $A$ span $\Z_2^d$, and we can construct a $(d+1)$-element frame by taking $\Theta_{\CF} = A$.
\end{proof}

\begin{remark} \label{addzeros}
The columns of $\Theta_{\CF}$ can be augmented by copies of the zero vector without affecting $S$, so non-minimal frames can always be constructed from minimal frames by including any number of copies of the zero vector.  
\end{remark}

Next we construct minimal frames whose prescribed frame operators are not of full rank.  Let $r(A)$ and $c(A)$ denote the number of rows and columns of a matrix $A$, respectively.

\begin{lemma}[\cite{L75}, Section 4] \label{LempelDecomp}
Let $S$ be a $d\times d$ symmetric matrix of $\rank(S) < d$.  There exists a permutation matrix $P$ and a nonsingular matrix $T$ such that 
\[ S = P^*\left[ \begin{array}{c c} L & M \\ M^* & K \end{array} \right] P =  P^*T \left[ \begin{array}{c c} L & 0 \\ 0 & 0 \end{array} \right] T^*P \]
where $L$ is a symmetric matrix with $r(L) = \rank(L) = \rank(S)$.  
\end{lemma}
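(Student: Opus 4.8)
The plan is to establish the two displayed equalities in turn. First I would use a permutation $P$ to move a nonsingular $\rank(S)\times\rank(S)$ principal submatrix of $S$ into the upper-left corner, which gives the first equality with $L$ symmetric and $r(L)=\rank(L)=\rank(S)$. Then I would block-diagonalize the resulting matrix by a Schur-complement congruence, which simultaneously produces the nonsingular $T$ and forces the trailing block to vanish.

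Set $r=\rank(S)$ and let $N=\ker S$ be the radical of the symmetric bilinear form $(x,y)\mapsto x^*Sy$, so $\dim N=d-r$. The crucial point, and the step I expect to be the main obstacle, is to produce an index set $I$ with $|I|=r$ for which the principal submatrix $S[I,I]$ is nonsingular; over $\Z_2$ one cannot rely on a nonzero diagonal, so I would argue via transversality to $N$ instead. The images of $\e_1,\ldots,\e_d$ span the quotient $\Z_2^d/N$, which has dimension $r$, so some $r$ of them, indexed by $I$, have images forming a basis; then $W:=\Span(\e_i:i\in I)$ has dimension $r$ and meets $N$ trivially, whence $W\oplus N=\Z_2^d$. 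I claim the form is nondegenerate on $W$: if $w\in W$ satisfies $w^*Sw'=0$ for all $w'\in W$, then for an arbitrary $v=w'+z$ with $w'\in W$ and $z\in N$ we get $w^*Sv=w^*Sw'+w^*Sz=0$ (since $Sz=0$), so $w\in N\cap W=\{0\}$. As $S[I,I]$ is exactly the matrix of this restricted form, it is nonsingular. Taking $P$ to be the permutation matrix carrying the indices in $I$ to the front yields $S=P^*\begin{pmatrix}L&M\\M^*&K\end{pmatrix}P$ with $L=S[I,I]$ symmetric, establishing the first equality.

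For the second equality, write $B=\begin{pmatrix}L&M\\M^*&K\end{pmatrix}$ and use the invertibility of $L$ to eliminate the off-diagonal blocks. With $U=\begin{pmatrix}I_r&0\\M^*L^{-1}&I_{d-r}\end{pmatrix}$ a direct computation, using that $L^{-1}$ is symmetric so that $U^*=\begin{pmatrix}I_r&L^{-1}M\\0&I_{d-r}\end{pmatrix}$, gives $UBU^*=\begin{pmatrix}L&0\\0&K+M^*L^{-1}M\end{pmatrix}$, whose lower block is the Schur complement of $L$ in $B$. Since congruence by $P$ and by the invertible $U$ preserves rank, $UBU^*$ has rank $\rank(S)=r$; as its block $L$ already accounts for rank $r$, the Schur complement must equal $0_{d-r}$. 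Hence $UBU^*=\begin{pmatrix}L&0\\0&0\end{pmatrix}$, and setting $T=U^{-1}$ (which is nonsingular) gives $B=T\begin{pmatrix}L&0\\0&0\end{pmatrix}T^*$, and therefore $S=P^*T\begin{pmatrix}L&0\\0&0\end{pmatrix}T^*P$, as required.

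The only genuinely delicate step is the existence of the size-$\rank(S)$ nonsingular principal block, precisely because symmetric matrices over $\Z_2$ may be alternating (zero-diagonal); the transversality argument circumvents this since it is insensitive to the characteristic. Everything else is a rank-preserving congruence, and the vanishing of the trailing block is forced entirely by the rank count rather than by any positivity.
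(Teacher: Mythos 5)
The paper never proves this lemma---it is imported verbatim from Lempel's paper (\cite{L75}, Section 4)---so there is no internal argument to measure yours against; what you have written is a correct, self-contained proof of the cited result. Both halves check out over $\Z_2$. The transversality step is sound: choosing $I$ so that the classes of $\{\e_i : i\in I\}$ form a basis of $\Z_2^d/\ker S$ gives $W\cap\ker S=\{0\}$, and your nondegeneracy argument (that $w^*Sw'=0$ for all $w'\in W$ forces $w^*Sv=0$ for all $v\in\Z_2^d$, hence $Sw=0$ by symmetry of $S$) correctly yields that $S[I,I]$ is nonsingular; this is indeed the delicate point, since the shortcut via a nonzero diagonal entry is unavailable in characteristic $2$, exactly as you say. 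The Schur-complement step is also right: in characteristic $2$ the elimination block enters with sign $+M^*L^{-1}$ rather than $-M^*L^{-1}$, symmetry of $L^{-1}$ gives the stated form of $U^*$, and the rank count $\rank(L)+\rank(K+M^*L^{-1}M)=\rank(UBU^*)=\rank(S)$ forces the trailing block to vanish, so $T=U^{-1}$ is nonsingular and the same permutation $P$ serves in both displayed equalities, as the statement requires. Two small remarks: your argument is field-independent, so it proves slightly more than the binary case the paper needs; and the rank-$0$ case ($S=0_d$, $L$ empty) is degenerate but trivially covered by $P=T=I_d$.
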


\begin{cor} \label{evenrank}
A $d\times d$ symmetric matrix $S$ with $\diag(S) = \vec 0$ must have even rank.
\end{cor}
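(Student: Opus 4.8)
The plan is to reduce the statement to the single fact that a \emph{nonsingular} symmetric matrix over $\Z_2$ with zero diagonal must have even order, and then to use the decomposition of Lemma \ref{LempelDecomp} to extract such a nonsingular ``core'' of size exactly $\rank(S)$.

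First I would dispose of the full-rank case directly. If $\rank(S) = d$ and $\diag(S) = \vec 0$, then $S$ is itself a nonsingular symmetric matrix over $\Z_2$ with zero diagonal, and the core fact (below) applies immediately to give that $d = \rank(S)$ is even. If instead $\rank(S) < d$, I would invoke Lemma \ref{LempelDecomp} to obtain a permutation matrix $P$ with $PSP^* = \left[\begin{smallmatrix} L & M \\ M^* & K \end{smallmatrix}\right]$, where $L$ is symmetric with $r(L) = \rank(L) = \rank(S)$. The key observation is that since $P$ is a permutation matrix, the conjugation $PSP^* = PSP^{T}$ merely reorders the rows and columns of $S$ simultaneously; hence the diagonal of $PSP^*$ is a permutation of $\diag(S) = \vec 0$ and is therefore zero. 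In particular $\diag(L) = \vec 0$. Thus $L$ is a square, full-rank (hence nonsingular) symmetric matrix over $\Z_2$ with zero diagonal and of order $\rank(S)$, and the core fact forces $\rank(S)$ to be even.

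It remains to prove the core fact, which I expect to be the main obstacle: a nonsingular symmetric matrix $L$ over $\Z_2$ with $\diag(L) = \vec 0$ has even order $n$. I would argue via the Leibniz expansion of the determinant over $\Z_2$, where signs disappear: $\det L \equiv \sum_{\sigma} \prod_{i} L_{i,\sigma(i)} \pmod{2}$, the sum ranging over permutations $\sigma$ of $\{1,\dots,n\}$. Using the symmetry $L_{i,\sigma(i)} = L_{\sigma(i),i}$, the term indexed by $\sigma$ equals the term indexed by $\sigma^{-1}$, so all permutations with $\sigma \neq \sigma^{-1}$ cancel in pairs modulo $2$. The surviving permutations are exactly the involutions; and any involution possessing a fixed point $i$ contributes a factor $L_{ii} = 0$ and hence vanishes. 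Therefore only fixed-point-free involutions survive, and such involutions exist only when $n$ is even. Consequently, if $n$ were odd we would have $\det L \equiv 0$, contradicting the nonsingularity of $L$; so $n = \rank(S)$ is even.

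I anticipate that the congruence step in the $\rank(S)<d$ case is the most delicate point to state carefully, so I would emphasize that it is \emph{only the permutation part} $P$ of Lempel's decomposition that is needed to transfer $\diag(S)=\vec 0$ to $\diag(L)=\vec 0$; the nonsingular factor $T$ plays no role here, which keeps the argument self-contained. (As an alternative to the determinant computation, one could instead cite the classical fact that a nondegenerate alternating bilinear form over a field exists only in even dimension.)
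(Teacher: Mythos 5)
Your proof is correct, and its skeleton coincides with the paper's: dispose of the full-rank case, then use Lemma \ref{LempelDecomp} to pass to the core $L$, noting that conjugation by the permutation matrix $P$ merely reorders the diagonal, so $\diag(L) = \vec 0$, and apply the full-rank case to $L$. The one genuine difference is how the full-rank case is handled: the paper simply cites it as known (\cite{CHLL97}, Section 9.3), whereas you prove it from scratch via the Leibniz expansion of $\det L$ over $\Z_2$ --- pairing each permutation with its inverse so that only involutions survive modulo $2$, killing every involution with a fixed point using $L_{ii}=0$, and concluding that a nonzero determinant forces a fixed-point-free involution and hence even order. That argument is valid (it is the standard proof that a nondegenerate alternating bilinear form exists only in even dimension, which is exactly your parenthetical alternative), and it buys self-containedness at little cost, while the paper's citation buys brevity. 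Your emphasis that only the permutation part of Lempel's decomposition is needed --- the nonsingular factor $T$ plays no role --- matches precisely what the paper's proof uses, stated there as ``the diagonal elements of $S$ are the diagonal elements of $L$ and $K$.''
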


\begin{proof}
It is known that if $\rank(S) = d$, then $d$ must be even (see, for example, \cite{CHLL97} Section 9.3).  Suppose $\rank(S) < d$.  The $\rank(S) \times \rank(S)$ symmetric matrix $L$ constructed in Lemma \ref{LempelDecomp} has $\rank(L) = \rank(S)$.  Since the diagonal elements of $S$ are the diagonal elements of $L$ and $K$, $\diag(L) = \vec 0$.  So $\rank(S)$ must be even.
\end{proof}

\begin{theorem} \label{LempelSmallrank}
Let $S$ be a $d\times d$ symmetric matrix with $\rank(S) < d$.  There exists a $k$-element frame $\CF$ such that $S = \Theta_{\CF}\Theta_{\CF}^*$ if and only if $k \geq 2d - \rank(S)$.
\end{theorem}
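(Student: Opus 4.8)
My plan is to handle the two directions with different tools: a rank–nullity argument for necessity, and an explicit block construction built on Lempel's decomposition for sufficiency. For necessity, suppose $\CF$ is a $k$-element frame with $\Theta_\CF\Theta_\CF^* = S$, and set $A = \Theta_\CF$, a $d\times k$ matrix. By Proposition \ref{framecols}(1), $A$ has full row rank $d$, so $A$ is surjective and $A^*$ is injective with $\dim\ker A = k-d$. The key point is that $A^*$ restricts to an injection of $\ker S$ into $\ker A$: if $Sx=0$ then $A(A^*x)=(AA^*)x=Sx=0$, so $A^*x\in\ker A$, while injectivity of $A^*$ keeps the restriction injective. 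Hence $\dim\ker S\le\dim\ker A$, that is $d-\rank(S)\le k-d$, which rearranges to $k\ge 2d-\rank(S)$.

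For sufficiency I would first reduce to the minimal case $k=2d-\rank(S)$, since by Remark \ref{addzeros} appending copies of $\vec 0$ preserves the frame operator and the spanning property while raising $k$ by one at a time. Writing $r=\rank(S)$, I would apply Lemma \ref{LempelDecomp} to get $S=QS_0Q^*$ with $Q=P^*T$ nonsingular and $S_0=\left[\begin{smallmatrix}L&0\\0&0\end{smallmatrix}\right]$, where $L$ is $r\times r$ symmetric of full rank. Because $Q$ is invertible, any $d\times(2d-r)$ matrix $A_0$ with $A_0A_0^*=S_0$ and $\rank A_0=d$ produces $A=QA_0$ with $AA^*=S$ and $\rank A=d$, and then $\Theta_\CF=A$ is the desired frame. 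So everything comes down to factoring $S_0$ with exactly the right number of columns.

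Set $m=d-r$. When $\diag(L)\ne\vec 0$, Theorem \ref{LempelFullrank} supplies an $r\times r$ factor $B$ of $L$ of rank $r$, and $A_0=\left[\begin{smallmatrix}B&0\\0&C\end{smallmatrix}\right]$ with $C$ the $m\times 2m$ matrix of disjoint weight-two rows $\e_1+\e_2,\ldots,\e_{2m-1}+\e_{2m}$ (so $CC^*=0_m$ and $\rank C=m$) has exactly $r+2m=2d-r$ columns. I expect the main obstacle to be the case $\diag(L)=\vec 0$: here $r$ is even by Corollary \ref{evenrank}, the minimal factor $B$ of $L$ is only $r\times(r+1)$, and a naive block-diagonal construction overshoots $2d-r$ by one column. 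The remedy I would use is to let the padding reuse that extra column. Since every row of $B$ is even, the all-ones vector $\vec 1\in\Z_2^{r+1}$ is orthogonal to all rows of $B$ and is itself odd (as $r+1$ is odd); taking $D$ to be the $m\times(r+1)$ matrix whose only nonzero row is $\vec 1$ gives $BD^*=0$ and $DD^*=\e_1\e_1^*$. I would then build an $m\times(2m-1)$ matrix $E$ of full row rank with $EE^*=\e_1\e_1^*$ (first row a single $1$, the remaining rows disjoint weight-two), so that $A_0=\left[\begin{smallmatrix}B&0\\D&E\end{smallmatrix}\right]$ has lower-right block $DD^*+EE^*=0_m$ and exactly $(r+1)+(2m-1)=2d-r$ columns. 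In both cases the full row rank of $E$ (respectively $C$) forces the rows of $A_0$ to be independent, so $\rank A_0=d$, which completes the construction.
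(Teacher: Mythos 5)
Your proof is correct and takes essentially the same route as the paper: necessity via a rank bound from the spanning property, and sufficiency via Lemma \ref{LempelDecomp} with a case split on $\diag(L)$, where your block $\left[\begin{smallmatrix}D & E\end{smallmatrix}\right]$ (all-ones row with a single extra $1$, plus disjoint weight-two rows) is exactly the paper's row $[r_1 \ | \ r_2]$ together with $\widetilde B$. The only cosmetic difference is in the necessity direction, where you prove the required instance of Frobenius's rank inequality directly by a rank--nullity argument instead of citing it.
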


\begin{proof}
Necessity follows from Frobenius's rank inequality
\[ \rank(\Theta_{\CF}) + \rank(\Theta_{\CF}^*) \leq k + \rank(S) \]
since frames are spanning sets.  

Conversely, let $k$ be an integer such that $k \geq 2d - \rank(S)$.  Let $L, P, T$ be the matrices guaranteed by Lemma \ref{LempelDecomp}, and let $V = P^*T$.  Suppose $\diag(L) \neq \vec 0$.  By Theorem \ref{Lempel} there exists a factor $H$ of $L$ such that 
\[ \left[ \begin{array}{c c} L & 0 \\ 0 & 0 \end{array} \right] =  \left[ \begin{array}{c} H \\ 0 \end{array} \right]  \left[ \begin{array}{c c} H^* & 0 \end{array} \right] \]
and $r(H) = c(H) = \rank(L)$.  Consider the augmented matrix
\[ A = \left[ \begin{array}{c c} H & 0 \\ 0 & B \end{array} \right] \] where the columns of $B$ are the standard basis vectors of $\Z_2^{d-{\tiny \rank(S)}}$, each repeated twice.  Then $r(A) = d$, $c(A) = 2d - \rank(S)$, and $AA^* = \left[ \begin{array}{c c} L & 0 \\ 0 & 0 \end{array} \right]$.  By construction, $\rank(A) = d$.  Since $V$ is nonsingular, $VA$ is a $d\times (2d-\rank(S))$ matrix of rank $d$ such that $S = VAA^*V^*$.  If $k = 2d - \rank(S)$, let a minimal frame $\CF$ be the columns of $VA$; if $k > 2d - \rank(S)$, augment the columns of $VA$ with the necessary number of zero vectors.

Now suppose $\diag(L) = \vec 0$.  By Corollary \ref{evenrank}, $\rank(L) = \rank(S)$ must be even.  As above (by Theorem \ref{Lempel}) we can factorize $L$ with a matrix $H$, but now $r(H) = \rank(L)$ and $c(H) = \rank(L)+1$.  In this case, we build the following augmented matrix:
\[ \widetilde A = \left[ \begin{array}{c c c} H & \bigg| & 0 \\ \hline r_1 & \big| & r_2 \\ \hline 0 & \bigg| & \widetilde B \end{array} \right] \]
where $r_1 = [1\, 1\, 1\, \cdots\, 1 ]$ is a vector of length $c(H)$, $r_2 = [1\, 0\, 0\, 0\, \cdots\, 0]$ has length $2(d-\rank(S))-1$, and the columns of $\widetilde B$ are the zero vector followed by the standard basis vectors of $\Z_2^{d-{\tiny (\rank(S)+1)}}$, each repeated twice.  Since the $(i,j)$ entry of the product $\widetilde A \widetilde A^*$ can be viewed as the dot product of the $i$th and $j$th rows of $\widetilde A$, it is easy to see that $\widetilde A \widetilde A^* = \left[ \begin{array}{c c} L & 0 \\ 0 & 0 \end{array} \right]$.  Indeed, since $\diag(L) = \vec 0$, each row of $H$ is an even vector.  Since $c(H)$ is odd, the vector $[r_1 \ | \ r_2]$ is even, as is each row of $\widetilde B$.  By construction, $\rank(\widetilde A) = d$, $r(\widetilde A) = r(H) + 1 + d - \rank(S) - 1 = d$, and $c(\widetilde A) = c(H) + 1 + 2(d-\rank(S) -1) = 2d - \rank(S)$.  As above let $\CF$ consist of the columns of $V\widetilde A$ if $k = 2d - \rank(S)$ or these columns together with $k-2d+\rank(S)$ copies of the zero vector if $k > 2d - \rank(S)$.
\end{proof}

Theorems \ref{LempelFullrank} and \ref{LempelSmallrank} provide minimal (and non-minimal) frames with frame operator $S$, subject only to restrictions based on $\rank(S)$.  In what follows, sometimes we will make additional assumptions on $S$, which allow the construction of frames with frame operator $S$ in different, and sometimes more intuitive, ways.  

\begin{definition}
A $d\times d$ symmetric matrix $S$ is said to be {\em parity indicative} if, for every $1\leq i \leq d$, the diagonal entry $S_{ii} = 1$ if and only if the $i^{\mbox{\tiny th}}$ row of $S$ is odd.
\end{definition}

\begin{lemma} \label{parind}
Let $S$ be a $d\times d$ symmetric matrix, and suppose $S = AA^*$ for some $d\times m$ matrix $A$.  If every column of $A$ is odd, then $S$ is parity indicative.  Conversely, if $S$ is parity indicative and the columns of $A$ are linearly independent, then every column of $A$ must be odd.
\end{lemma}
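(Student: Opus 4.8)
The plan is to reduce both implications to a single algebraic identity relating the diagonal of $S$, the row parities of $S$, and the column parities of $A$. Write $a_1, \dots, a_m$ for the columns of $A$, so that $S = AA^* = \sum_{k=1}^m a_k a_k^*$ and hence $S_{ij} = \sum_{k=1}^m a_k[i]\,a_k[j]$. Since every entry lies in $\Z_2$, where $x^2 = x$, the diagonal entry collapses to
\[ S_{ii} = \sum_{k=1}^m a_k[i]^2 = \sum_{k=1}^m a_k[i]. \]

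First I would translate the two parity conditions in the definition of \emph{parity indicative} into dot-product statements. The $i$th row of $S$ is odd exactly when $\sum_{j=1}^d S_{ij} = 1$ in $\Z_2$ (again because $S_{ij}^2 = S_{ij}$), and interchanging the order of summation gives
\[ \sum_{j=1}^d S_{ij} = \sum_{k=1}^m a_k[i]\Bigl(\sum_{j=1}^d a_k[j]\Bigr) = \sum_{k=1}^m a_k[i]\,(a_k,a_k), \]
since $\sum_{j=1}^d a_k[j] = (a_k,a_k)$ is precisely the parity of the column $a_k$. Comparing this with the formula for $S_{ii}$, the matrix $S$ is parity indicative if and only if $S_{ii} = \sum_{j} S_{ij}$ for every $i$, that is, if and only if
\[ \sum_{k=1}^m \bigl(1 + (a_k,a_k)\bigr)\,a_k[i] = 0 \quad \text{for all } 1 \le i \le d. \]
Setting $c_k = 1 + (a_k,a_k) \in \Z_2$ (so $c_k = 0$ exactly when $a_k$ is odd and $c_k = 1$ exactly when $a_k$ is even), this says that $S$ is parity indicative if and only if $\sum_{k=1}^m c_k\,a_k = \vec 0$ in $\Z_2^d$.

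Both halves of the lemma then follow from this characterization. For the forward direction, if every column $a_k$ is odd then each $c_k = 0$, the relation $\sum_k c_k a_k = \vec 0$ holds trivially, and so $S$ is parity indicative. For the converse, parity indicativeness yields the dependence relation $\sum_k c_k a_k = \vec 0$ among the columns of $A$; if those columns are linearly independent, every coefficient $c_k$ must vanish, forcing $(a_k,a_k) = 1$, i.e., every column of $A$ is odd.

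I expect the only delicate point to be the bookkeeping over $\Z_2$: carefully invoking $x^2 = x$ to collapse both $S_{ii}$ and the row-parity sum, and recognizing $\sum_{j} a_k[j]$ as the column parity $(a_k,a_k)$. Once the equivalence ``parity indicative $\iff \sum_k c_k a_k = \vec 0$'' is in hand, the role of the linear-independence hypothesis is exactly to promote this dependence relation to the conclusion that no column can be even, which is the crux of the converse.
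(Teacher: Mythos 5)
Your proof is correct, and it takes a genuinely different route from the paper's. The paper argues combinatorially: for the forward direction it fixes a row index $i$, views the $i$th row of $S$ as the sum of those rows $a_j^*$ with $a_j[i]=1$, and uses the vector-parity lemma (sum of two odd vectors is even, etc.) to count how many terms appear; for the converse it argues by contradiction, splitting the columns into odd ones $\{o_j\}$ and even ones $\{e_j\}$, locating an index $i$ at which an odd number of the $e_j$ have a $1$, and then doing a two-case parity analysis to exhibit a failure of parity indicativeness. Your argument instead compresses everything into one identity: using $x^2=x$ in $\Z_2$ you show $S_{ii}=\sum_k a_k[i]$ and $\sum_j S_{ij}=\sum_k a_k[i](a_k,a_k)$, so that
\[ S \text{ is parity indicative} \iff \sum_{k=1}^m \bigl(1+(a_k,a_k)\bigr)a_k=\vec 0, \]
i.e., if and only if the even columns of $A$ sum to zero. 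Both directions of the lemma then fall out in one line each: all columns odd makes the relation trivial, and linear independence kills every coefficient $1+(a_k,a_k)$. What your approach buys is a clean if-and-only-if characterization of parity indicativeness valid for \emph{any} factorization $S=AA^*$ (no independence hypothesis needed to state it), which makes the role of the linear-independence assumption transparent: it merely promotes the dependence relation to emptiness of the set of even columns. What the paper's approach buys is consistency with the rest of its toolkit: it reuses Lemma \ref{vecsum} directly and stays in the language of vector parities, which is the idiom used again in Lemma \ref{parind-zerodiag} and Theorem \ref{parind-min}; indeed, your identity could also be used to streamline the proof of Lemma \ref{parind-zerodiag}, where the relation $A\vec 1 = \vec 0$ and the relation among odd columns play exactly the roles of your $\sum_k c_k a_k = \vec 0$.
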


\begin{proof}
Assume $S = AA^* = \sum_{i=1}^m a_i a_i^*$ where each column $a_i$ of $A$ is odd.  The $i^{\mbox{\tiny th}}$ row of $S$ equals the sum of those $a_j^*$ satisfying $a_j[i] = 1$.  Suppose the $i^{\mbox{\tiny th}}$ row of $S$ is odd.  Then this sum must be composed of an odd number of nonzero terms by Lemma \ref{vecsum}.  That is, there is an odd number of indices $j$ having $a_j[i] = 1$.  Consequently, the $i^{\mbox{\tiny th}}$ row of $A$ is an odd vector, and $S_{ii} = \sum_{j=1}^m a_j[i]a_j[i] = 1$.  On the other hand, if the $i^{\mbox{\tiny th}}$ row of $S$ is even, then Lemma \ref{vecsum} implies that an even number of $a_j$ have $a_j[i] = 1$, resulting in $S_{ii} = 0$.

To show the converse, assume that the columns of $A$ are linearly independent.  Suppose some of the columns of $A$ are even; denote the odd columns by $\{o_j\}$ and the even columns by $\{e_j\}$.  If for every index $l$, $e_j[l] = 1$ for an even number of the vectors $\{e_j\}$, then  $\sum e_j = \vec 0$, contradicting the linear independence of the columns of $A$.  So, assume that there exists an index $i$ such that the number of the vectors $\{e_j\}$ that satisfy $e_j[i] = 1$ is odd.  If an odd number of the vectors $\{o_j\}$ are such that $o_j[i] = 1$, then the $i^{\mbox{\tiny th}}$ row of $A$ is even, so $S_{ii} = 0$; on the other hand, the $i^{\mbox{\tiny th}}$ row of $S$ equals the sum of an odd number of rows $e_j^*$ plus an odd number of rows $o_j^*$, which is odd, by Lemma \ref{vecsum}.  If there are an even number of the vectors $\{o_j\}$ with $o_j[i] = 1$, then the $i^{\mbox{\tiny th}}$ row of $A$ is odd, so $S_{ii} = 1$; on the other hand, the $i^{\mbox{\tiny th}}$ row of $S$ equals the sum of an odd number of $e_j^*$ plus an even number of $o_j^*$, which is even, by Lemma \ref{vecsum}.  Therefore, $S$ is not parity indicative.
\end{proof}

\begin{lemma} \label{parind-zerodiag}
Let $S$ be a $d\times d$ symmetric matrix, and suppose $S = AA^*$ for some $d\times m$ matrix $A$.  If $S$ is parity indicative, $\diag(S) = \vec 0$, and $c(A) = \rank(A)+1$, then either every column of $A$ is even or every column of $A$ is odd.
\end{lemma}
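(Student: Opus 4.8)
The plan is to translate both hypotheses---$\diag(S)=\vec 0$ and parity indicativeness---into statements about the null space of $A$, regarded as a linear map $\Z_2^{m}\to\Z_2^{d}$, and then to exploit the fact that $c(A)=\rank(A)+1$ forces that null space to be exactly one-dimensional.

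First I would record the two elementary consequences of $\diag(S)=\vec 0$. Since $S_{ii}=\sum_k A_{ik}^2=\sum_k A_{ik}$ over $\Z_2$, the vanishing of the diagonal says precisely that every row of $A$ is even; equivalently, writing $\vec 1$ for the all-ones vector in $\Z_2^{m}$, we have $A\vec 1=\vec 0$. Thus $\vec 1$ is a nonzero element of the null space of $A$ (and $\vec 1\neq\vec 0$ since $m=\rank(A)+1\geq 1$). Because $c(A)=\rank(A)+1$, rank-nullity over $\Z_2$ shows this null space has dimension $1$, so it equals exactly $\{\vec 0,\vec 1\}$; in other words, the only nontrivial dependence among the columns of $A$ is $\sum_{k=1}^m a_k=\vec 0$.

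Next I would unpack parity indicativeness. Since $\diag(S)=\vec 0$, the defining biconditional (``$S_{ii}=1$ iff row $i$ of $S$ is odd'') collapses, with each $S_{ii}=0$, to the statement that every row of $S$ is even. Letting $p\in\Z_2^{m}$ be the vector of column parities, $p[k]=(a_k,a_k)=\sum_j A_{jk}$, a direct computation of the parity of the $i$th row of $S$ gives $\sum_j S_{ij}=\sum_k A_{ik}\big(\sum_j A_{jk}\big)=(Ap)[i]$ over $\Z_2$. Hence ``every row of $S$ is even'' is exactly the condition $Ap=\vec 0$, so the column-parity vector $p$ also lies in the null space of $A$.

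Combining the two steps forces $p\in\{\vec 0,\vec 1\}$. If $p=\vec 0$, then $(a_k,a_k)=0$ for every $k$, so every column of $A$ is even; if $p=\vec 1$, then $(a_k,a_k)=1$ for every $k$, so every column is odd. This is the desired dichotomy. The only real content---and the step I expect to be the crux---is the recognition that the unique nontrivial column dependence of $A$ is the all-ones relation $A\vec 1=\vec 0$ supplied by the zero diagonal, while the column-parity vector must itself be a null vector of $A$ supplied by parity indicativeness; once both hypotheses are seen as null-space memberships in a one-dimensional kernel, the conclusion is immediate.
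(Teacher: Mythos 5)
Your proposal is correct and is essentially the paper's own argument: both proofs rest on the observations that $A\vec 1 = \vec 0$ (zero diagonal) and that the odd-column indicator vector --- which is exactly your column-parity vector $p$ --- lies in the null space of $A$ (rows of $S$ even), and both then invoke rank-nullity with $c(A) = \rank(A)+1$. The only cosmetic difference is that you run the argument directly (the null space equals $\{\vec 0, \vec 1\}$, so $p$ must be one of these), whereas the paper assumes both parities occur and derives a contradiction from having two independent null vectors $x$ and $\vec 1 + x$.
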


\begin{proof}
Denote the odd columns and even columns of $A$ by $\{o_j\}$ and $\{e_j\}$, respectively, and assume both sets are nonempty.  Since each row of $S$ is even, for every $i$, an even number of the vectors $\{o_j\}$ must have $o_j[i] = 1$, by Lemma \ref{vecsum}.  It follows that $\sum o_j = \vec 0$; that is, $Ax = \vec 0$ where $x[j] = 1$ if $j$ is the index of an odd column and $x[j] = 0$ if $j$ is the index of an even column.  But since every row of $A$ is even, $A\vec 1 = \vec 0$.  Hence $Ay = \vec 0$ for $y = \vec 1 + x$.  Since the nonzero linearly independent vectors $x$ and $y$ are both contained in the null space of $A$, the rank-nullity theorem implies
\[ \rank(A) \leq c(A) - 2 = \rank(A)+1 - 2 = \rank(A) - 1,\]
a contradiction.  Therefore, either $\{o_j\}$ or $\{e_j\}$ must be empty.
\end{proof}

One additional useful fact is required before we state our main result.

\begin{lemma} \label{eoswap}
\begin{enumerate}
  \item Suppose $e, o_1, o_2, o_3 \in \Z_2^d$ are four vectors such that $e$ is even and $o_1, o_2, o_3$ are odd.  Then there exists three even vectors $f_1, f_2, f_3$ and an odd vector $p$ such that 
\[ e e^* + o_1 o_1^* + o_2 o_2^* + o_3 o_3^* = f_1 f_1^* + f_2 f_2^* + f_3 f_3^* + p p^*, \]
and $\Span\{e, o_1, o_2, o_3\} = \Span\{f_1, f_2, f_3, p\}$.
  \item Suppose $e_1, e_2, e_3, o \in \Z_2^d$ are four vectors such that $e_1, e_2, e_3$ are even and $o$ is odd.  Then there exists an even vector $f$ and odd vectors $p_1, p_2, p_3$ such that 
\[ e_1 e_1^* + e_2 e_2^* + e_3 e_3^* + o o^* = f f^* + p_1 p_1^* + p_2 p_2^* + p_3 p_3^*, \]
and $\Span\{e_1, e_2, e_3, o\} = \Span\{f, p_1, p_2, p_3\}$.
\end{enumerate}
\end{lemma}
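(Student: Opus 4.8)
The plan is to handle both parts simultaneously with a single explicit substitution. In each case we are given four vectors $v_1,v_2,v_3,v_4$ (namely $e,o_1,o_2,o_3$ in part (1) and $e_1,e_2,e_3,o$ in part (2)); I would set $w = v_1+v_2+v_3+v_4$ and replace every vector by $v_j' = v_j + w$. Conceptually this is right-multiplication of the synthesis matrix $[\,v_1\mid v_2\mid v_3\mid v_4\,]$ by $Q = I_4 + \vec 1\,\vec 1^{\,*}$ in the coefficient space $\Z_2^4$; since $\vec 1$ has an even number of ones in $\Z_2^4$ it is self-orthogonal, so $QQ^* = I_4$, and hence the transformation preserves $\sum_j v_j v_j^*$ and, being invertible, preserves the span. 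The outputs $f_1,f_2,f_3,p$ (resp.\ $f,p_1,p_2,p_3$) are then read off as the four new vectors.

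The first step is to record that $w$ is odd. By repeated application of Lemma \ref{vecsum}, the parity of a sum is the sum of the parities, and in both parts the four given vectors contain an odd number of odd vectors (three in part (1), one in part (2)); hence $w$ is odd. The second step is the parity flip: adding the odd vector $w$ to any vector reverses its parity (Lemma \ref{vecsum}), so each even vector becomes odd and each odd vector becomes even. This produces exactly the prescribed parities, namely three even vectors $f_1,f_2,f_3$ and one odd vector $p$ in part (1), and one even vector $f$ together with three odd vectors $p_1,p_2,p_3$ in part (2).

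The only computation worth writing out is the outer-product identity, which I would verify directly:
\[ \sum_{j=1}^4 (v_j+w)(v_j+w)^* = \sum_{j=1}^4 v_j v_j^* + \Big(\sum_{j=1}^4 v_j\Big)w^* + w\Big(\sum_{j=1}^4 v_j\Big)^* + 4\,ww^*. \]
Because $\sum_j v_j = w$ and we work over $\Z_2$, the two middle terms each equal $ww^*$ and cancel, while $4\,ww^* = 0_d$; the right-hand side collapses to $\sum_j v_j v_j^*$, as required. This is the one place where it matters that there are \emph{exactly} four vectors, so that the total sum is $w$ and the count $4$ vanishes mod $2$. The span condition is then automatic: the substitution is its own inverse (the new vectors again sum to $w$, so applying $v_j'\mapsto v_j'+w$ recovers $v_j$), and $w$ lies in both spans since $w = p+f_1+f_2+f_3$ in part (1), and analogously $w = f+p_1+p_2+p_3$ in part (2); hence $\Span\{v_j\} = \Span\{v_j'\}$. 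There is no substantive obstacle beyond choosing the correct $w$: once the substitution $v_j\mapsto v_j+w$ is in hand, every requirement follows from Lemma \ref{vecsum} and the short cancellation above.
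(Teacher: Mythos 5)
Your proposal is correct and is essentially the paper's own construction in disguise: writing $w = e+o_1+o_2+o_3$ (resp.\ $w = e_1+e_2+e_3+o$), the paper's explicit vectors $f_1 = e+o_1+o_2$, $f_2 = e+o_1+o_3$, $f_3 = e+o_2+o_3$, $p = o_1+o_2+o_3$ (and analogously in part (2)) are precisely your $v_j + w$, so the two substitutions produce identical families of vectors. The only difference is presentational: you package the substitution as right-multiplication of the synthesis matrix by the self-inverse matrix $Q = I_4 + \vec 1\,\vec 1^{\,*}$ and write out the outer-product and span verifications that the paper leaves as ``easy computations.''
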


\begin{proof}
For part (1), let $f_1 = e + o_1 + o_2$, $f_2 = e + o_1 + o_3$, $f_3 = e + o_2 + o_3$, and $p = o_1 + o_2 + o_3$.  By Lemma \ref{vecsum}, $f_1, f_2$ and $f_3$ are even and $p$ is odd.  For part (2), let $f = e_1 + e_2 + e_3$, and $p_1 = e_1 + e_2 + o$, $p_2 = e_1 + e_3 + o$, $p_3 = e_2 + e_3 + o$.  Lemma \ref{vecsum} implies $f$ is even and $p_1, p_2, p_3$ are odd.  Easy computations show that the given equalities are satisfied.  
\end{proof}

We are now ready for the binary analog of Theorem \ref{givenfo}: necessary and sufficient conditions on pairs of matrices $S$ and vectors $\alpha$ such that $S$ is the frame operator of a frame with vector ``norms" determined by $\alpha$.  The necessary condition is easy.

\begin{theorem} \label{necessity}
Let $\CF = \{f_i\}_{i=1}^K$ be a frame such that  $S = \Theta_{\CF}\Theta_{\CF}^*$.  Let $\alpha$ be the vector in $\Z_2^K$ defined by $\alpha[i] = (f_i, f_i)$ for each $i$.  Then $\|\alpha\|_0 \equiv \Tr(S) \mod 2$. 
\end{theorem}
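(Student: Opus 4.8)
The plan is to mirror the trace computation used in the necessity direction of Theorem~\ref{getalpha}, replacing the cross-Gramian of a dual pair with the Gramian $\Theta_{\CF}^*\Theta_{\CF}$ of $\CF$ itself. First I would observe, as recorded in the remark immediately following Proposition~\ref{uniquedual}, that the diagonal of the Gramian matrix $\Theta_{\CF}^*\Theta_{\CF}$ is exactly the vector whose $i^{\mbox{\tiny th}}$ entry is $(f_i,f_i)=\alpha[i]$. Since each diagonal entry of the Gramian lies in $\{0,1\}$, the ordinary integer trace of $\Theta_{\CF}^*\Theta_{\CF}$ simply counts the indices $i$ with $(f_i,f_i)=1$, so that $\Tr(\Theta_{\CF}^*\Theta_{\CF})=\|\alpha\|_0$ holds as an equality of integers.

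Next I would invoke the cyclic invariance of the trace modulo $2$ for binary matrices, which is part of the convention fixed at the end of Section~2, to write $\Tr(\Theta_{\CF}^*\Theta_{\CF})\equiv\Tr(\Theta_{\CF}\Theta_{\CF}^*)=\Tr(S) \mod2$. Chaining this with the previous identity yields $\|\alpha\|_0\equiv\Tr(S) \mod2$, which is precisely the asserted congruence. This is the same two-step pattern (identify a diagonal, then swap the order of a product inside the trace) that powered the necessity half of Theorem~\ref{getalpha}, now specialized to the self-dual quantity $S=\Theta_{\CF}\Theta_{\CF}^*$ rather than $\Theta_{\CF}\Theta_{\CH}^*=I_d$.

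I do not anticipate any genuine obstacle here; the result is essentially immediate once the Gramian's diagonal is recognized. The single point that warrants care is the bookkeeping distinction between $\|\alpha\|_0$, which is a bona fide nonnegative integer, and $\Tr(S)$, which is only meaningful modulo $2$. Because the diagonal entries of the Gramian are $0$'s and $1$'s, the integer trace of the Gramian equals $\|\alpha\|_0$ on the nose, so the passage to residues modulo $2$ need only be performed at the one step where the cyclic property of the trace is applied. I expect the finished proof to occupy no more than two lines.
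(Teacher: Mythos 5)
Your proposal is correct and is essentially identical to the paper's own proof, which is the one-line chain $\|\alpha\|_0 \equiv \Tr(\Theta_{\CF}^* \Theta_{\CF}) \equiv \Tr(\Theta_{\CF} \Theta_{\CF}^*) \equiv \Tr(S) \mod 2$. Your extra care in distinguishing the integer-valued $\|\alpha\|_0$ from the mod-2 trace is a fair clarification of the paper's notation, but it does not change the argument.
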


\begin{proof}
\[
\|\alpha\|_0 \equiv \Tr(\Theta_{\CF}^* \Theta_{\CF}) \equiv \Tr(\Theta_{\CF} \Theta_{\CF}^*) \equiv \Tr(S)\mod2. 
\]
\end{proof}

Sufficiency breaks down into three possible scenarios.  If $S$ is parity indicative, then a minimal frame $\CF$ with frame operator $S$ must consist of only odd vectors or can attain any nonzero vector $\alpha$ with $\|\alpha\|_0 \equiv \Tr(S) \mod 2$ in the sense that $(f_i,f_i) = \alpha[i]$ for each $i$; if $S$ is not parity indicative, a minimal frame must contain at least one even vector.  This is shown in Theorems \ref{parind-min} and \ref{notparind-all}.  Nonminimal frames can be constructed to correspond to any nonzero $\alpha$ with $\|\alpha\|_0 \equiv \Tr(S) \mod 2$ if $S$ is parity indicative or to any such $\alpha$ with at least one zero entry if $S$ is not parity indicative (Corollary \ref{parind-notmin} and Theorem \ref{notparind-all}).

The frame elements can be permuted in any way without affecting the frame operator.  Indeed, if $\Theta_{\CF}\Theta_{\CF}^* = S$ and $\Theta_{\widetilde \CF} = \Theta_{\CF}P^*$ for some permutation matrix $P$, then 
\[
\Theta_{\widetilde \CF} \Theta_{\widetilde \CF}^* = \Theta_\CF P^* P \Theta_\CF^*
						= \Theta_\CF \Theta_\CF^*
						= S.
\]
Therefore, in what follows, we need only construct frames with the correct number of odd elements, corresponding to $\|\alpha\|_0$, in order to attain the dot products prescribed by $\alpha$.

\begin{theorem} \label{parind-min}
Let $S$ be a $d\times d$ parity indicative symmetric matrix.  Let $K = 2d - \rank(S)$, and let $\alpha \in \Z_2^K$ be a nonzero vector with $\|\alpha\|_0 \equiv \Tr(S) \mod2$.  
\begin{enumerate}
  \item Suppose $\diag(S) \neq \vec 0$. 
  \begin{enumerate}
    \item If $\rank(S) = d$, there exists a (minimal) $K$-element frame $\CF$ such that $S = \Theta_{\CF}\Theta_{\CF}^*$ and $(f_i, f_i) = \alpha[i]$ for every $i$ only if $\|\alpha\|_0 = K$.
    \item If $\rank(S) < d$, there exists a (minimal) $K$-element frame $\CF$ such that $S = \Theta_{\CF}\Theta_{\CF}^*$ and $(f_i, f_i) = \alpha[i]$ for every $i$.
  \end{enumerate}
  \item Suppose $\diag(S) = \vec 0$.  Then $\rank(S) < d$.
  \begin{enumerate}
    \item If $\rank(S) = d-1$, there exists a (minimal) $K$-element frame $\CF$ such that $S = \Theta_{\CF}\Theta_{\CF}^*$ and $(f_i, f_i) = \alpha[i]$ for every $i$ only if $\|\alpha\|_0 = K$.
    \item If $\rank(S) < d-1$, there exists a (minimal) $K$-element frame $\CF$ such that $S = \Theta_{\CF}\Theta_{\CF}^*$ and $(f_i, f_i) = \alpha[i]$ for every $i$. 
  \end{enumerate}
\end{enumerate}
\end{theorem}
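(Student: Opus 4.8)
The plan is to treat the two ``only if'' items (1a) and (2a) as necessity statements and the two unconditional items (1b) and (2b) as constructions. Since permuting the frame vectors leaves $S$ unchanged and only permutes the entries of $\alpha$ (as noted just before the theorem), in every case it suffices to control the \emph{number} of odd vectors $n_o := \|\alpha\|_0$ in a minimal frame; I record at the outset that for any minimal frame with operator $S$ one has $n_o \equiv \Tr(S) \pmod 2$ by Theorem~\ref{necessity}.

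For the necessity in (1a), when $\rank(S)=d$ and $\diag(S)\neq\vec0$ a minimal frame has $K=d$ vectors and is a basis, so its columns are linearly independent; Lemma~\ref{parind} then forces every column to be odd, whence $\alpha=\vec1$ and $\|\alpha\|_0=d=K$. For (2a), when $\diag(S)=\vec0$ and $\rank(S)=d-1$ a minimal frame has $K=d+1$ vectors and $\rank(\Theta_{\CF})=d$, so $\ker(\Theta_{\CF})$ is one-dimensional. I would exhibit two kernel vectors: since $\diag(S)=\sum_k f_k=\vec0$ we get $\Theta_{\CF}\vec1=\vec0$; and since parity-indicativeness together with $\diag(S)=\vec0$ makes every row of $S$ even, the vector $\mathbf p\in\Z_2^{K}$ with $\mathbf p[k]=(f_k,f_k)$ satisfies $(\Theta_{\CF}\mathbf p)[i]=\sum_j S_{ij}=0$, i.e.\ $\Theta_{\CF}\mathbf p=\vec0$. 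As $\mathbf p\neq\vec0$ (a spanning set must contain an odd vector) and the kernel is one-dimensional, $\mathbf p=\vec1$; thus every $f_k$ is odd and $\|\alpha\|_0=K$.

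The existence in (1b) and (2b) is where Lemma~\ref{eoswap} does the work. Each of its two moves replaces four frame vectors by four others with the same span and the same sum of outer products, changing $n_o$ by $\pm2$; a move is available precisely when the frame has at least three vectors of the majority parity and at least one of the minority. A short boundary analysis of these moves shows that, inside the fixed residue class $n_o\equiv\Tr(S)$, all admissible counts with $1\le n_o\le K-1$ lie in a single connected ``mixed'' component reachable from any one minimal frame that contains an even vector, while the extreme value $n_o=K$ (all vectors odd) is \emph{isolated}: no move can enter or leave an all-odd frame. Consequently it is enough to produce (i) one minimal frame possessing an even vector, which by sweeping with Lemma~\ref{eoswap} realizes every admissible $n_o<K$, and (ii) when $K\equiv\Tr(S)$, one all-odd minimal frame for the value $n_o=K$.

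Both required frames are built from a minimal factor of $S$. When $\diag(S)\neq\vec0$ (Case 1b), Theorem~\ref{Lempel} gives a factor with $r:=\rank(S)$ columns, which are linearly independent by Proposition~\ref{equalrank} and hence all odd by Lemma~\ref{parind}; adjoining $d-r$ odd vectors, each repeated twice and chosen (by adding an odd column of the factor when necessary) to complete a basis modulo the column space, yields an all-odd minimal frame, while adjoining instead $d-r$ even such vectors yields a mixed base frame with $n_o=r$. The delicate case is $\diag(S)=\vec0$ (Case 2b): here the minimal factor has $r+1$ columns, all odd by Lemma~\ref{parind-zerodiag}, so the number of vectors one must still adjoin is the \emph{odd} number $2(d-r)-1$, and these cannot simply be thrown in as cancelling equal pairs without destroying either minimality (a zero vector is not allowed) or the operator $S$. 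Handling this single leftover---producing $2(d-r)-1$ odd (respectively even) vectors whose outer products sum to $0_d$ while completing the span---is the main obstacle, and I expect to resolve it by an explicit inductive construction in the spirit of Theorem~\ref{pars_all} (or by applying Lemma~\ref{eoswap} to a carefully seeded configuration), so that both an all-odd and a mixed base frame are obtained in the zero-diagonal case as well.
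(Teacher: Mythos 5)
Your necessity arguments and your case (1b) are sound. Part (1a) is the paper's own argument. Your (2a) argument is genuinely different from the paper's and arguably cleaner: the paper deduces that a minimal frame must be all-odd by applying Lemma~\ref{parind-zerodiag} to $\Theta_{\CF}$, whereas you exhibit $\vec 1$ and the diagonal vector $\mathbf{p}$, $\mathbf{p}[k]=(f_k,f_k)$, as elements of the one-dimensional null space of $\Theta_{\CF}$ and conclude $\mathbf{p}=\vec 1$; both identities you need ($\diag(S)=\sum_k f_k$ and $\Theta_{\CF}\mathbf{p}$ equal to the vector of row sums of $S$) check out. Your (1b) seed frames plus the sweeping argument via Lemma~\ref{eoswap} (including the correct observation that the all-odd configuration is isolated under those two moves, so $\|\alpha\|_0=K$ needs its own seed) accomplish what the paper does by slightly different bookkeeping.

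The gap is case (2b), which you explicitly leave unresolved, and the one concrete claim you make about it is false. Lemma~\ref{parind-zerodiag} gives only the dichotomy that the columns of the minimal factor $A$ are all even or all odd; it does not say they are odd, and in fact they must all be \emph{even}: $\diag(S)=\vec 0$ makes every row of $A$ even, so the columns of $A$ sum to $\vec 0$, while $\rank(S)$ is even by Corollary~\ref{evenrank}, so $A$ has an odd number $\rank(S)+1$ of columns, and an odd number of odd vectors cannot sum to an even vector by Lemma~\ref{vecsum}. Consequently your (1b) recipe gives you no odd vectors to start from, and, as you yourself note, the odd number $2(d-\rank(S))-1$ of vectors still to be adjoined cannot be taken as cancelling pairs. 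The missing idea (the paper's) is to repair the factor itself: append a zero column to $A$ to form $B$, choose a row $b_n^*$ of $B$ lying in the span of the other rows, and replace it by its complement $b_n^*+\vec 1^*$, yielding $C$. Then $CC^*=S$ still holds (the off-diagonal corrections are the weights of the even rows $b_j^*$, hence zero, and the diagonal correction is $c(B)=\rank(S)+2\equiv 0$), $\rank(C)=\rank(S)+1$ (the all-ones vector is not in the row space of $B$, since that row space vanishes in the appended coordinate), and every column of $C$ is now odd. Adjoining $d-\rank(S)-1$ standard basis vectors outside the column space of $C$, each repeated twice, produces the all-odd minimal frame; replacing one adjoined pair $\{\e_l,\e_l\}$ by $\{\e_l+c,\e_l+c\}$ for an odd column $c$ of $C$ produces a mixed seed, after which your sweep applies. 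Without this construction (or an equivalent one), (2b) is unproven.
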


\begin{proof}
In case (1a), Theorem \ref{LempelFullrank} implies the existence of a $K=d$ - element frame $\CF$ whose frame operator is $S$.  The $d$ columns of $\Theta_{\CF}$ must be linearly independent, so every $f_i$ must be odd, by Lemma \ref{parind}.  (As a corollary of Theorem \ref{necessity}, we note that $d \equiv \Tr(S) \mod 2$ for any $d\times d$, full rank, parity indicative symmetric matrix $S$ with $\diag(S) \neq \vec 0$.)

For case (1b), instead of using the result of Theorem \ref{LempelSmallrank}, we rely directly on Theorem \ref{Lempel} to construct a $d\times \rank(S)$ matrix $A$ with $\rank(A) = \rank(S)$ such that $AA^* = S$.  Since the columns of $A$ are linearly independent, Lemma \ref{parind} implies that they are all odd.  As in the proof of Theorem \ref{LempelSmallrank}, we consider an augmented matrix 
\[ \Theta_{\CF} = \left[ \begin{array}{c c} A & B \end{array} \right] \]
but with more care taken in the choice of $B$.
By letting the columns of $B$ be $d - \rank(S)$ of the standard basis vectors not in the column space of $A$, each repeated twice, we construct a frame $\CF$ for $\|\alpha\|_0 = 2d - \rank(S)$.  Replacing any identical pair of columns of $B$, say $\{\e_l, \,\e_l\}$, with $\{\e_l+\e_n, \,\e_l+\e_n\}$ for any other basis vector $\e_n \neq \e_l$, the columns of $\Theta_{\CF}$ still span, $\Theta_{\CF}\Theta_{\CF}^*$ is still equal to $S$, but now $\CF$ contains two fewer odd vectors.  In this way, we are able to construct a frame $\CF$ with dot products $(f_i,f_i)$ satisfying any $\|\alpha\|_0 = \rank(S) + 2m$ for $0\leq m \leq d-\rank(S)$.   (Note that, by the proof of Theorem \ref{necessity}, $\rank(S) \equiv \Tr(S) \mod 2$.)  By Lemma \ref{eoswap}, any four vectors consisting of three odds and one even can be substituted by four vectors consisting of three evens and one odd, having the same span and no effect on $\Theta_{\CF}\Theta_{\CF}^*$.  Each substitution allows us to increase the number of even vectors by two, until only two odd vectors remain in $\CF$ if $\rank(S)$ is even or one odd vector remains if $\rank(S)$ is odd.  Therefore, we can build a frame with $2d-\rank(S)$ elements, corresponding to any nonzero $\alpha$ with $\|\alpha\|_0 \equiv \rank(S) \equiv \Tr(S) \mod 2$.

Now let $S$ be parity indicative with $\diag(S) = \vec 0$.  Since every row of $S$ is even, $\rank(S) < d$.  In case (2a), Theorem \ref{LempelSmallrank} implies the existence of a $(d+1)$-element frame $\CF$ whose frame operator is $S$.  Since $\CF$ is a spanning set, it must contain an odd vector.  By Lemma \ref{parind-zerodiag}, every vector in $\CF$ must be odd.  (Note that, by Corollary \ref{evenrank}, case (2a) can only occur if $\rank(S)$ is even, hence $d$ is odd.)

Lastly, we assume in case (2b) that $\rank(S) \leq d-2$.  By Theorem \ref{Lempel} and Proposition \ref{equalrank}, there exists a $d\times (\rank(S)+1)$ matrix $A$ with $\rank(A) = \rank(S)$ such that $AA^* = S$.  By Lemma \ref{parind-zerodiag}, either every column of $A$ is even or every column is odd.  Since $S_{ii} = 0$ for every $i$, every row of $A$ is even, hence the sum of the columns of $A$ is $\vec 0$.  Moreover, since $\diag(S) = \vec 0$, we know that $\rank(S)$ must be even, by Corollary \ref{evenrank}.    If every column of $A$ were odd, then the sum of all $\rank(S)+1$ columns would have to be odd, by Lemma \ref{vecsum}, yielding a contradiction.  So every column of $A$ must be even.  

Augment $A$ with a column of zeros and call the resulting matrix $B$.  Then each column of $B$ is even, each row of $B$ is even, and $B$ has an even number of columns.  Consider a row $b_n^*$ of $B$ such that $b_n^* \in \Span\{b_j^*: b_j^* \mbox{ is a row of $B$ and } j\neq n\}$.  Replace $b_n^*$ by its complement (that is, add $\vec 1^*$ for $\vec 1 \in \Z_2^{{\tiny \rank(S)}+2}$ to $b_n^*$), and call the resulting matrix $C$.  Then $CC^* = S$, $\rank(C) = \rank(S)+1$, and $C$ is composed of $\rank(S)+2$ odd columns.  As in case (1b), we now augment $C$ with $d-(\rank(S)+1)$ of the standard basis vectors not in the column space of $C$, each repeated twice, to construct $\Theta_{\CF}$.  In doing so, we construct a frame $\CF$ consisting of $\rank(S)+2 + 2(d-(\rank(S)+1)) = 2d - \rank(S)$ vectors, with frame operator $S$, such that every element of $\CF$ is odd.  By Theorem \ref{necessity}, $\|\alpha\|_0$ must be even.  As in case (1b), we can replace pairs of odd elements of $\CF$ by even vectors until only two odd vectors remain.
\end{proof}

\begin{cor} \label{parind-notmin}
Let $S$ be a $d\times d$ parity indicative symmetric matrix.  Let $K > 2d - \rank(S)$.  Let $\alpha \in \Z_2^K$ be a nonzero vector such that $\|\alpha\|_0 \equiv \Tr(S) \mod 2$.  Then there exists a $K$-element frame $\CF$ such that $S = \Theta_{\CF}\Theta_{\CF}^*$ and $(f_i, f_i) = \alpha[i]$ for every $i$.  
\end{cor}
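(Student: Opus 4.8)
The plan is to bootstrap from the minimal constructions of Theorem \ref{parind-min} by padding with zero vectors and then nudging the number of odd vectors up or down while leaving $S$ fixed. Set $K_0 = 2d - \rank(S)$ and $m = \|\alpha\|_0$. Since $\alpha$ is nonzero, $1 \le m \le K$, and because $K_0 = 2d - \rank(S) \equiv \rank(S) \equiv \Tr(S) \mod 2$ (the last congruence was recorded in the proof of Theorem \ref{parind-min}), the hypothesis $m \equiv \Tr(S) \mod 2$ gives $m \equiv K_0 \mod 2$; hence both $m - K_0$ and $K_0 - m$ are even. As in the discussion preceding Theorem \ref{parind-min}, it is enough to produce a $K$-element spanning collection $\CF$ with $S = \Theta_{\CF}\Theta_{\CF}^*$ having exactly $m$ odd vectors, since a permutation then moves the odd vectors to the positions where $\alpha$ equals $1$.

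The base object will be an all-odd minimal frame. In every one of the four cases of Theorem \ref{parind-min}, its proof yields a $K_0$-element frame $\CF_0$ with frame operator $S$ all of whose vectors are odd: in cases (1a) and (2a) minimality forces this, and in cases (1b) and (2b) the frame built before any use of Lemma \ref{eoswap} is already all-odd. I would start from this $\CF_0$.

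If $m \ge K_0$, I would keep $\CF_0$ and adjoin $(m - K_0)/2$ copies of a repeated pair $\{v, v\}$, where $v$ is one of the odd vectors of $\CF_0$, together with $K - m$ copies of $\vec 0$. Each repeated pair contributes $vv^* + vv^* = 0_d$ to the frame operator and, by Remark \ref{addzeros}, the zero vectors are inert; the resulting collection spans, has frame operator $S$, and contains exactly $K_0 + (m - K_0) = m$ odd vectors among its $K$ elements. If instead $m < K_0$, I would first adjoin the $K - K_0 \ge 1$ copies of $\vec 0$ and then apply Lemma \ref{eoswap}(1) exactly $(K_0 - m)/2$ times; each application replaces three odd and one even vector by three even and one odd, preserving span and frame operator while decreasing the odd count by two, and since the stock of even vectors only grows and at least $m + 2 \ge 3$ odd vectors are present before each step, the lemma applies until precisely $m$ odd vectors remain.

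I expect the only delicate point to be the bookkeeping in the second branch: verifying that Lemma \ref{eoswap}(1) can be invoked at every stage (one even and three odd vectors on hand) and that the parity identity $m \equiv K_0 \mod 2$ makes $(K_0 - m)/2$ a nonnegative integer, so the reduction halts exactly at $m$. The remaining verifications---that appending cancelling pairs and zero vectors leaves $\Theta_{\CF}\Theta_{\CF}^*$ equal to $S$ and does not destroy the spanning property---are routine and already implicit in Theorem \ref{parind-min}.
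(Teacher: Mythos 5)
Your proof is correct and follows essentially the same route as the paper's: both start from the minimal frames of Theorem \ref{parind-min}, pad with zero vectors and with identical pairs of vectors (which contribute nothing to $\Theta_{\CF}\Theta_{\CF}^*$ mod 2), and use Lemma \ref{eoswap} to step the number of odd vectors down two at a time. Your version merely organizes the argument more uniformly (observing that all four cases admit an all-odd minimal frame, then splitting on $\|\alpha\|_0 \gtrless 2d-\rank(S)$) and makes the parity and applicability bookkeeping explicit, which the paper leaves implicit.
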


\begin{proof}
Since $S$ is parity indicative, a $K$-element frame with $K > 2d - \rank(S)$ is necessarily non-minimal and can be constructed by augmenting the minimal frames of the previous theorem.  Consider first the minimal frame $\CF$ guaranteed by case (1a) of Theorem \ref{parind-min}.  Adding the zero vector to $\CF$ allows us to apply Lemma \ref{eoswap} and create frames satisfying $(f_i, f_i) = \alpha[i]$ for any $\|\alpha\|_0 \equiv \Tr(S) \mod 2$ with $0 < \|\alpha\|_0 < d$.  Similarly, for case (2a), including the zero vector allows the construction of a frame corresponding to any $\|\alpha\|_0 = 2, 4, 6, \ldots d+1$.  In either case, the addition of two identical copies of odd vectors or two identical copies of even vectors provides frames for any $\|\alpha\|_0 \equiv \Tr(S) \mod 2$ when $\|\alpha\|_0 \geq d+2$ (case (1a)) or $\|\alpha\|_0 \geq d+3$ (case (2a)).  Similarly in cases (1b) and (2b), the addition of two identical copies of an odd vector or two identical copies of an even vector yield frames for $2d - \rank(S) < \|\alpha\|_0$.
\end{proof}

\begin{theorem} \label{notparind-all}
Let $S$ be a $d\times d$ symmetric matrix that is not parity indicative.  Let $K \geq 2d - \rank(S)$ or if $\rank(S) = d$ and $\diag(S) = \vec 0$, let $K \geq d+1$ .  Let $\alpha \in \Z_2^K$ be a nonzero vector such that $\|\alpha\|_0 \equiv \Tr(S) \mod 2$. Then there exists a $K$-element frame $\CF$ such that $S = \Theta_{\CF}\Theta_{\CF}^*$ and $(f_i, f_i) = \alpha[i]$ for every $i$ only if $\|\alpha\|_0 \neq K$.
\end{theorem}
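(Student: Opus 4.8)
The plan is to establish the biconditional implicit in the statement: the ``only if'' direction (that $\|\alpha\|_0 = K$ cannot occur) together with its companion, that every admissible $\alpha$ with $\|\alpha\|_0 \neq K$ is realized by some frame. As in the preceding results, I would first invoke the remark that permuting frame vectors leaves the frame operator unchanged, so that it suffices to construct, for each admissible count, a $K$-element frame with frame operator $S$ possessing exactly $\|\alpha\|_0$ \emph{odd} vectors; a permutation then moves the odd vectors into the positions prescribed by the ones of $\alpha$.

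The necessity (``only if'') direction is one line. If a frame $\CF$ with $\Theta_{\CF}\Theta_{\CF}^* = S$ and $(f_i,f_i) = \alpha[i]$ existed with $\|\alpha\|_0 = K$, then every column of $A = \Theta_{\CF}$ would be odd, whence $S = AA^*$ would be parity indicative by the first part of Lemma \ref{parind}, contradicting the hypothesis. Thus $\|\alpha\|_0 \neq K$.

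For the existence direction I would build one base frame and then tune its odd-count with the swap lemma. Theorem \ref{LempelFullrank} (when $\rank(S)=d$) or Theorem \ref{LempelSmallrank} (when $\rank(S)<d$) supplies a minimal frame $\CF_0$ with frame operator $S$; since $S$ is not parity indicative, the contrapositive of the first part of Lemma \ref{parind} forces $\CF_0$ to contain at least one even vector. Appending copies of $\vec 0$ to reach $K$ vectors (these are even and leave $S$ fixed, by Remark \ref{addzeros}) gives a base frame with at least one even vector and odd-count $n_0 \equiv \Tr(S) \mod2$ (Theorem \ref{necessity}). Now Lemma \ref{eoswap} lets me move the odd-count in steps of $2$ without changing the span or $S$: part (1) trades three odds and one even for three evens and one odd (lowering the count by $2$), and part (2) reverses this (raising it by $2$). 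Applying part (1) while at least three odd vectors remain reduces the odd-count to its minimum ($1$ when $\Tr(S)$ is odd, $2$ when it is even, these being the smallest counts compatible with the spanning requirement that a frame contain an odd vector), and applying part (2) while at least three even vectors remain raises it to the largest value compatible with keeping an even vector. Since each swap changes the count by exactly $2$, every $n \equiv \Tr(S) \mod2$ with $1 \leq n \leq K-1$ is attained, and these are exactly the admissible values of $\|\alpha\|_0$ — the upper bound $n \leq K-1$ being precisely the requirement $\|\alpha\|_0 \neq K$.

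The main obstacle is the bookkeeping at the two extremes, and the key is a parity match. The base even-count $K - n_0$ has the parity of $K - \Tr(S)$; hence the part-(2) swaps exhaust the even vectors down to a single leftover even (odd-count $K-1$) exactly when $K-1 \equiv \Tr(S) \mod2$, i.e.\ exactly when $K-1$ is itself admissible, and otherwise down to two leftover evens (odd-count $K-2$), which is then the largest admissible count; a symmetric check governs the minimum. I would also verify the degenerate small cases in which $K$ is too small to perform any swap, where one checks directly that the base frame already realizes the unique admissible odd-count, so that no tuning is needed.
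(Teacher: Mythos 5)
Your proposal is correct and follows essentially the same route as the paper's proof: build a base $K$-element frame via Theorem \ref{LempelFullrank} (with Remark \ref{addzeros}) or Theorem \ref{LempelSmallrank}, note via Lemma \ref{parind} that it must contain an even vector (and an odd one, to span), and then tune the odd-vector count in steps of two with Lemma \ref{eoswap}, finishing with a permutation. Your explicit one-line necessity argument and the parity bookkeeping at the extremes just make precise what the paper's four-case enumeration ($\|\alpha\|_0$ up to $K-1$ or $K-2$ according to the parities of $m$ and $K$) states.
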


\begin{proof}
We use Theorem \ref{LempelFullrank} and Remark \ref{addzeros} or Theorem \ref{LempelSmallrank} to construct a $K$-element frame $\CF$ such that $\Theta_{\CF}\Theta_{\CF}^* = S$.  By Lemma \ref{parind}, $\CF$ must contain an even vector.  Of course, $\CF$ must also contain an odd vector, in order to span.  Let $m \equiv \Tr(S) \mod 2$ represent the number of odd elements of $\CF$ and $K-m$ be the number of even elements.  By Lemma \ref{eoswap}, $\CF$ may be replaced by a frame with two more or two fewer odd vectors.  Through repeated applications, we can construct a frame $\CF$ corresponding to any $\|\alpha\|_0 = 1, 3, 5, \ldots, K-1$ if $m$ is odd and $K$ is even, any $\|\alpha\|_0 = 1, 3, 5, \ldots, K-2$ if $m$ is odd and $K$ is odd, any $\|\alpha\|_0 = 2, 4, 6, \ldots, K-1$ if $m$ ($\geq 2$) is even and $K$ ($>m$) is odd, or any $\|\alpha\|_0 = 2, 4, 6, \ldots, K-2$ if $m$ ($\geq 2$) is even and $K$ ($>m$) is even.
\end{proof}

\section{Examples and Data}

\subsection{Examples}
In this subsection we consider two symmetric matrices $S$ and build frames with various $\alpha$'s to illustrate the main result of Section 4. The algorithm for factoring a matrix as $S = AA^*$ and for reducing $A$ into a minimal factor can be found in \cite{L75}.

\begin{example}
Consider the identity matrix
\[
S = I_4  =  \left[ 
	\begin{array}{c c c c} 
	1 & 0 & 0 & 0\\  
	0 & 1 & 0 & 0\\
	0 & 0 & 1 & 0\\
	0 & 0 & 0 & 1\\
	\end{array} \right]
\]
and note that it is a symmetric, parity indicative, full rank matrix.  Any frame with frame operator $S$ is a Parseval frame.  By Theorem \ref{parind-min}, a minimal $4$-element such Parseval frame must satisfy $\|\alpha\|_0 = 4$ where $\alpha[i] = (f_i,f_i)$ for each $i$; clearly, this follows from the Parseval frame necessarily being an orthonormal basis.  Corollary \ref{parind-notmin} guarantees Parseval frames in $\Z_2^4$ of length $K =5$ with either 2 or 4 odd vectors corresponding to any $\alpha \in \Z_2^5$ with $\| \alpha \|_0 = 2, 4$. To begin the construction, factor $S$ as $I_4I_4^*$. Appending the zero-column to the left factor yields the matrix
\[
\Theta_{\CF} = \left[ 
	\begin{array}{c c c c c} 
	1 & 0 & 0 & 0 & 0\\  
	0 & 1 & 0 & 0 & 0\\
	0 & 0 & 1 & 0 & 0\\
	0 & 0 & 0 & 1 & 0\\
	\end{array} \right],
\]
the columns of which constitute a frame with frame operator $S$ and $\alpha = (1,1,1,1,0)$. To obtain any other $\alpha \in \Z_2^5$ with $\| \alpha\|_0 = 4$, simply permute the columns.

We utilize Lemma \ref{eoswap} to reduce the number of odd vectors by 2. Let $e$ be the zero-column and $o_1,o_2,o_3$ be the first, second, and third columns of $\Theta_{\CF}$, respectively. Replacing $e, o_1, o_2, o_3$ with their counterparts constructed in Lemma \ref{eoswap} results in
\[
\Theta_{\CF'} = \left[ 
	\begin{array}{c c c c c} 
	1 & 1 & 0 & 0 & 1\\  
	1 & 0 & 1 & 0 & 1\\
	0 & 1 & 1 & 0 & 1\\
	0 & 0 & 0 & 1 & 0\\
	\end{array} \right].
\]
Taking the columns of $\Theta_{\CF'}$ as frame vectors builds the frame $\CF'$ satisfying $\alpha = (0,0,0,1,1)$. Again, the columns of $\CF'$ can be permuted to acquire any $\alpha \in \Z_2^5$ with $\| \alpha \|_0 = 2$. Notice that a permutation of $\CF'$ appears in the proof of Theorem \ref{pars_all}.
\end{example}

\begin{example}
Suppose we wish to find a frame for $\Z_2^3$ of length 7 with frame operator
\[
S = \left[
	\begin{array}{c c c}
	0 & 1 & 0\\
	1 & 1 & 1\\
	0 & 1 & 0\\
	\end{array} \right].
\]
Since this rank 2, symmetric matrix is not parity indicative, we apply Theorem \ref{notparind-all}.  In doing so, we follow the proof of Theorem \ref{LempelSmallrank} and factor $S$ as 
\[ S =  P^*T \left[ \begin{array}{c c} L & 0 \\ 0 & 0 \end{array} \right] T^*P \]
where $P$ is the $3 \times 3$ identity matrix, 
\[
L =  \left[ \begin{array}{c c} 0 & 1 \\ 1 & 1 \end{array} \right],
\quad\quad
\mbox{and}
\quad\quad
T = \left[
	\begin{array}{c c c}
	1 & 0 & 0\\
	0 & 1 & 0\\
	1 & 0 & 1\\
	\end{array} \right].
\]
Then
\[
A  =  \left[ \begin{array}{c c} H & 0 \\ 0 & B \end{array} \right] = \left[ 
	\begin{array}{c c c c} 
	1 & 1 & 0 & 0\\
	1 & 0 & 0 & 0\\
	0 & 0 & 1 & 1\\
	\end{array} \right],\\
\]
and we append three zero-columns to $TA$ to build the frame $\CF$:
\[
\Theta_\CF  = \left[ TA \ \ \vec 0 \ \vec 0 \ \vec 0 \right] =  
\left[
    \begin{array}{c c c c c c c}
    1 & 1 & 0 & 0 & 0 & 0 & 0\\
    1 & 0 & 0 & 0 & 0 & 0 & 0\\
    1 & 1 & 1 & 1 & 0 & 0 & 0\\
    \end{array}\right].
\]
Letting $(f_i,f_i) = \alpha[i]$ for each $i$, we see that $\CF$ satisfies $\alpha = (1, 0, 1, 1, 0, 0, 0)$.  We increase or decrease the number of odd vectors as desired, by applying Lemma \ref{eoswap} first to $\{f_1, f_3, f_4, f_5\}$ and then to $\{f_4, f_5, f_6, f_7\}$.  We obtain frames $\CF_1$ and $\CF_2$ satisfying 
\begin{eqnarray*}
\Theta_{\CF_1}\!\!\! &=& \!\!\!\left[
    \begin{array}{c c c c c c c}
    1 & 1 & 1 & 0 & 1 & 0 & 0\\
    1 & 0 & 1 & 0 & 1 & 0 & 0\\
    0 & 1 & 0 & 0 & 1 & 0 & 0\\
    \end{array}\right], \ \ \alpha_1 = (0,0,0,0,1,0,0);\\
\Theta_{\CF_2}\!\!\!  & = & \!\!\!\left[
    \begin{array}{c c c c c c c}
    1 & 1 & 0 & 0 & 0 & 0 & 0\\
    1 & 0 & 0 & 0 & 0 & 0 & 0\\
    1 & 1 & 1 & 0 & 1 & 1 & 1\\
    \end{array}\right], \ \ \alpha_2 = (1,0,1,0,1,1,1).
\end{eqnarray*}
By Theorem \ref{notparind-all}, $\| \alpha \|_0 = 7$ is unattainable.

\end{example}

\subsection{Data}
An exhaustive search for frame operators $\Theta_\CF \Theta_\CF^*$ and $\| \alpha \|_0$ associated with $\CF = \{f_j\}_{j=1}^K$ in $\Z_2^d$ was performed, using Python 3.6, for various dimensions and frame lengths (i.e. various $d$'s and $K$'s). The tables contained in this subsection hold information about the number of symmetric matrices that are frame operators and the set of $\| \alpha \|_0$ that accompany them.  We include summaries for dimensions $d = 2, \ldots, 5$.  Because every frame in $\Z_2^d$ must have at least $d$ vectors, and because $2d$ is the minimum number of vectors needed to ensure every symmetric matrix is a frame operator (Theorems \ref{LempelFullrank}, \ref{LempelSmallrank}), the computations were performed for $K = d, \ldots, 2d$.

For $d = 2,\ldots, 5$, in the table containing information about the $d$-dimensional binary space, the entry in the row labeled $\{\alpha_{min}, \alpha_{min} + 2, \ldots, \alpha_{min} + 2t \}$ and column labeled $K = k_0$ shows the number of symmetric matrices $S$ in $d$-dimensional space such that for each $\alpha \in \{\alpha_{min}, \alpha_{min} + 2, \ldots, \alpha_{min} + 2t \}$ there exists a frame $\CF = \{ f_j \}_{j = 1}^{k_0}$ such that $S = \Theta_{\CF}\Theta_{\CF}^*$ and $\alpha[i] = (f_i, f_i)$ for $1 \leq i \leq k_0$.

In each table, the sum of the entries of the last column represents all possible symmetric $d\times d$ binary matrices.  There are $2^{d(d+1)/2}$ such matrices, which becomes prohibitively large as the dimension $d$ increases.

\begin{table}[H]
    \begin{subtable}{2.6in}
    \centering
        \begin{tabular}{|l||l|l|l|}
        \hline
         \multirow{2}{*}{$\{ \|\alpha\|_0 \}$}  & \multicolumn{3}{c|}{$K$}\\ \cline{2-4}
                                                & 2 & 3 & 4 \\ \hhline{-===} 
         $\{1\}$                                  & 2 & 2 & 0 \\ \hline
         $\{2\}$                                  & 1 & 3 & 2 \\ \hline
         $\{1, 3\}$                               & 0 & 2 & 4 \\ \hline
         $\{2, 4\}$                               & 0 & 0 & 2 \\ \hline
        \end{tabular}
        \medskip
        \caption {Number of attainable frame operators of frames for $\Z_2^2$}
    \end{subtable}
    \quad
    \begin{subtable}{2.6in}
    \centering
        \begin{tabular}{|l||l|l|l|l|}
        \hline
        \multirow{2}{*}{$\{ \|\alpha\|_0 \}$}   & \multicolumn{4}{c|}{$K$}\\ \cline{2-5}
                                                & 3  & 4  & 5  & 6  \\ \hhline{-====}
        $\{ 1 \}$                                 & 12 & 0  & 0  & 0  \\ \hline
        $\{ 2 \}$                                 & 12 & 21 & 0  & 0  \\ \hline
        $\{ 3 \}$                                 & 4  & 0  & 0  & 0  \\ \hline
        $\{ 4 \}$                                 & 0  & 1  & 0  & 0  \\ \hline
        $\{ 1,  3 \}$                             & 0  & 28 & 24 & 0  \\ \hline
        $\{ 2,  4 \}$                             & 0  & 6  & 31 & 24 \\ \hline
        $\{ 1, 3, 5 \}$                           & 0  & 0  & 8  & 32 \\ \hline
        $\{ 2, 4, 6 \}$                           & 0  & 0  & 0  & 8  \\ \hline
        \end{tabular}
        \medskip
        \caption {Number of attainable frame operators of frames for $\Z_2^3$}
    \end{subtable}
\end{table}
\setcounter{table}{2}
\vspace{-.2in}
\begin{table}[H]
    \begin{tabular}{|l||l|l|l|l|l|}
    \hline
    \multirow{2}{*}{$\{ \|\alpha\|_0 \}$}   & \multicolumn{5}{c|}{$K$}\\ \cline{2-6}
                                            & 4   & 5   & 6   & 7   & 8   \\ \hhline{-=====}
    $\{ 2 \}$                                 & 168 & 0   & 0   & 0   & 0   \\ \hline
    $\{ 4 \}$                                 & 28  & 0   & 0   & 0   & 0   \\ \hline
    $\{ 1,  3 \}$                             & 224 & 392 & 0   & 0   & 0   \\ \hline
    $\{ 2,  4 \}$                             & 0   & 420 & 441 & 0   & 0   \\ \hline
    $\{ 1, 3, 5 \}$                           & 0   & 56  & 504 & 448 & 0   \\ \hline
    $\{ 2, 4, 6 \}$                           & 0   & 0   & 63  & 511 & 448 \\ \hline
    $\{ 1, 3, 5, 7 \}$                        & 0   & 0   & 0   & 64  & 512 \\ \hline
    $\{ 2, 4, 6, 8 \}$                        & 0   & 0   & 0   & 0   & 64  \\ \hline
    \end{tabular}
    \medskip
    \caption {Number of attainable frame operators of frames for $\Z_2^4$}
\end{table}
\vspace{-.2in}
\begin{table}[H]
    \begin{tabular}{|l||l|l|l|l|l|l|}
    \hline
    \multirow{2}{*}{$\{ \|\alpha\|_0 \}$}   & \multicolumn{6}{c|}{$K$}\\ \cline{2-7} 
                                            & 5    & 6     & 7     & 8 		& 9		& 10    \\ \hhline{-======}
    $\{ 5 \}$                                 & 448  & 0     & 0     & 0      & 0		& 0		\\ \hline
    $\{ 6 \}$                                 & 0    & 28    & 0     & 0      & 0		& 0		\\ \hline
    $\{ 1,  3 \}$                             & 6720 & 0     & 0     & 0      & 0		& 0		\\ \hline
    $\{ 2,  4 \}$                             & 6720 & 13020 & 0     & 0      & 0		& 0		\\ \hline
    $\{ 1, 3, 5 \}$                           & 0    & 13888 & 15120 & 0      & 0		& 0		\\ \hline
    $\{ 2, 4, 6 \}$                           & 0    & 840   & 15988 & 15345  & 0		& 0		\\ \hline
    $\{ 1, 3, 5, 7 \}$                        & 0    & 0     & 1008  & 16368  & 15360	& 0		\\ \hline
    $\{ 2, 4, 6, 8 \}$                        & 0    & 0     & 0     & 1023   & 16383	& 15360 \\ \hline
    $\{ 1, 3, 5, 7, 9 \}$					  & 0	 & 0 	 & 0	 & 0	  & 1024	& 16384 \\ \hline
    $\{ 2, 4, 6, 8, 10 \}$					  & 0	 & 0	 & 0	 & 0	  & 0 		& 1024  \\ \hline
    \end{tabular}
    \medskip
    \caption {Number of attainable frame operators of frames for $\Z_2^5$}
\end{table}

\section*{Acknowledgement}
We thank Erich McAlister for his very valuable feedback.


\end{document}